\numberwithin{equation}{section}
\newtheorem{thm}{Theorem}[section]
\newtheorem{prop}[thm]{Proposition}
\newtheorem{lem}[thm]{Lemma}
\newtheorem{conj}[thm]{Conjecture}
\newtheorem{dfn}[thm]{Definition}
\newtheorem{remark}[thm]{Remark}
\numberwithin{equation}{section}
\newcommand{\F}{\mathbb{F}}
\newcommand{\N}{\mathbb{N}}
\newcommand{\Q}{\mathbb{Q}}
\newcommand{\Z}{\mathbb{Z}}
\newcommand{\mbP}{\mathbb{P}}
\newcommand{\mcO}{\mathcal{O}}
\newcommand{\mfm}{\mathfrak{m}}
\newcommand{\mfn}{\mathfrak{n}}
\newcommand{\mfq}{\mathfrak{q}}
\newcommand{\mfP}{\mathfrak{P}}
\newcommand{\mfS}{\mathfrak{S}}
\newcommand{\GL}{\mathrm{GL}}
\newcommand{\Cl}{\mathrm{Cl}}
\newcommand{\Gal}{\mathrm{Gal}}
\def\1{1\!\!1}
\newcommand{\mrm}[1]{\mathrm{#1}}
\title[On the solutions of Diophantine equations]{On the solutions of $x^2= By^p+Cz^p$ and $2x^2= By^p+Cz^p$ over totally real fields}
\author[N. Kumar]{Narasimha Kumar}
\address[N. Kumar]{Department of Mathematics, Indian Institute of Technology Hyderabad, Kandi, Sangareddy 502285, INDIA.}
\email{narasimha@math.iith.ac.in}
\author[S. Sahoo]{Satyabrat Sahoo}
\address[S. Sahoo]{Department of Mathematics, Indian Institute of Technology Hyderabad, Kandi, Sangareddy 502285, INDIA.}
\email{ma18resch11004@iith.ac.in}
\keywords{Diophantine equations, Semi-stability, Irreducibility of Galois representations, Modularity of elliptic curves, Level lowering}
\subjclass[2010]{Primary 11D41, 11R80; Secondary  11F80, 11G05, 11R04}
\date{\today}
\begin{document}
	\begin{abstract}
      In this article, we study the solutions of certain type over a totally real number field $K$ of the Diophantine equation $x^2= By^p+Cz^p$ with prime exponent $p$, where $B$ is an odd integer and $C$ is either an odd integer or $C=2^r$ for $r \in \N$. Further, we study the non-trivial primitive solutions of the Diophantine equation $x^2= By^p+2^rz^p$ ($r\in {1,2,4,5}$) (resp., $2x^2= By^p+2^rz^p$ with $r \in \N$) with prime exponent $p$, over $K$. We also present several purely local criteria of $K$.
	\end{abstract}
	\maketitle
     
     \section{Literature, Results \& Methodology: An Overview}
    Throughout this article, $K$, $\mathbb{P}$, and $p$, denote a totally real number field, the set of all rational primes, and a rational prime, respectively. The study of non-trivial solutions to Diophantine equations is one of the exciting and most interesting areas in mathematics. A prominent and most interesting example is the Fermat equation $x^p+y^p=z^p$ with exponent $p$.
    In~\cite[Theorem 0.5]{W95}, Wiles used the modularity of elliptic curves over rationals to show that $\Z$-integral
    solutions of $x^p+y^p=z^p$ are trivial. In~\cite[Theorem 1.3]{JM04}, Jarvis and Meekin showed 
    the same result that continues to hold even over $\Z[\sqrt{2}]$. A similar study over $K$ has been initiated by Freitas and Siksek in~\cite[Theorem 3]{FS15} and showed that an asymptotic version of these results continues to hold over $K$, by employing some explicit bounds on the solutions of $S$-unit equation. In~\cite[Theorem 1]{D16}, Deconinck generalized ~\cite[Theorem 3]{FS15} to $Ax^p+By^p=Cz^p$ with $2 \nmid ABC$. 

In~\cite[Theorem 3]{R97}, Ribet showed that $x^p+2^ry^p+z^p=0$ with exponent $p$ has no non-trivial 
    $\Z$-solution for $1\leq r <p$. Over $K$, in~\cite[Theorem 3.2]{KS22}, we show that the equation $x^p+y^p=2^rz^p\ (r \in \N)$ has no asymptotic solution in $W_K$ (cf. \cite[Definition 3.2]{KS22} for $W_K$). Furthermore, we show that the equation $x^p+y^p=2^rz^p$ has no asymptotic solution in $\mcO_K^3$ for $r=2,3$ (cf. \cite[Theorem 3.3]{KS22}). The proofs of our results also depend on certain explicit bounds on the solutions of the $S$-unit equation.

    Similarly, Ivorra in~\cite{I03} studied $\Z$-solutions of $x^2=y^p+2^rz^p$ and $2x^2=y^p+2^rz^p$ for $0 \leq r < p$. In ~\cite[Theorem 1]{S03}, Siksek established that the only non-trivial primitive $\Z$-solutions of $x^2=y^p+2^rz^p$ are $r=3,\ x=\pm3,\ y=z=1$, where $p \in \mathbb{P}$ is arbitrary.   

   Before proceeding, let us define the term ``asymptotic'' in the context of Diophantine equations, clarifying the conditions under which these equations are said to not have ``asymptotic solutions''.
 
     \begin{dfn}
    	We say a Diophantine equation $Ax^2= By^p+Cz^p$ with exponent $p$ has no asymptotic solution in a set $S \subseteq \mcO_K^3$, if there exists a constant $V_{K,A,B,C}>0$ (depending on $K,A,B,C$) such that for primes $p>V_{K,A,B,C}$, the equation $Ax^2= By^p+Cz^p$ has no non-trivial primitive solution in $S$.
    \end{dfn}

    In~\cite{DM97}, Darmon and Merel showed that the equation $x^n+y^n=z^2$ with exponent $n \geq 4$ has no non-trivial primitive $\Z$-solutions.  In~\cite[Theorem 1.1]{IKO20}, I\c{s}ik, Kara, and Ozman proved that the ``asymptotic'' FLT holds for $x^p+y^p=z^2$ with exponent $p$ of certain type over $K$, whenever the narrow class number $h_K^+=1$ and there exists a $\mathfrak{P} \in P:=\mrm{Spec}(\mcO_K)$ lying above $2$ with residual degree $1$. The proof of ~\cite[Theorem 1.1]{IKO20} relies on certain explicit bounds on the solutions of the $S$-unit equation. ~\cite[Theorem 1.1]{IKO20} was extended in two different directions as follows.
    \begin{itemize}
     \item In ~\cite[Theorem 5.2]{KS22}, we relaxed the assumptions in ~\cite[Theorem 1.1]{IKO20} and proved that the equation $x^p+y^p=z^2$ has no asymptotic solution in $W_K^\prime$ (cf. \cite[Definition 5.2]{KS22} for $W_K^\prime$). 
     
    \item  In ~\cite[Theorem 3]{M22}, Mocanu generalized \cite[Theorem 1.1]{IKO20} by replacing the assumption from $h_K^+=1$ to $\Cl_{S_K}(K)=1$ and its proof depends on some explicit bounds on the solutions of $\alpha +\beta =\gamma^2$, with $\alpha, \beta \in \mcO_{S_K}^\ast$ and $\gamma \in  \mcO_{S_K}$ (cf. \S \ref{section for main result of x^2=By^p+2^rz^p} for the definitions of $\Cl_{S_K}(K)$, $\mcO_{S_K}$ and $\mcO_{S_K}^\ast$).
     \end{itemize}
     In the first part, we extend~\cite[Theorem 3]{M22} to $x^2= By^p+Cz^p$ with exponent $p$ over $K$, where $B$ is an odd integer and $C$ is either an odd integer or $C=2^r$ for $r \in \N$. In Theorem~\ref{main result1 for x^2=By^p+Cz^p Type I}, we prove that the equation $x^2= By^p+Cz^p$ with exponent $p$ has no asymptotic solution in $W_K$ (cf. Definition~\ref{def for W_K} for $W_K$). For $r=1,2,4,5$, we also prove that $x^2= By^p+2^rz^p$ with exponent $p$ has no asymptotic solution in $\mcO_K^3 \setminus S_r$ for some finite set $S_r$ (cf. Theorem~\ref{main result1 for x^2=By^p+2^rz^p over K}). In Proposition~\ref{cor to main result2 for x^2=By^p+2^rz^p over K}, we provide situations where $S_r= \phi$. 
     
    In the second part, we perform a similar analysis to the equation $2x^2= By^p+2^rz^p  \ (r \in \N)$ with exponent $p$ and show that it has no asymptotic solution in $\mcO_K^3$ (cf. Theorem~\ref{main result1 for 2x^2=By^p+Cz^p Type I}).
 The proofs of these results rely on certain explicit bounds on the solutions of the equation
 \begin{equation}
 	\label{keyequation}
 	\alpha+\beta=\gamma^2,
 \end{equation}
 with $\alpha, \beta \in \mcO_{S_K^{\prime}}^\ast, \gamma \in \mcO_{S_K^{\prime}}$ (cf. \S \ref{notations section for x^2=By^p+Cz^p} for the definition of $S_K^{\prime}$).
 
  In the last part, we provide several local criteria of $K$ for Theorems~\ref{main result1 for x^2=By^p+Cz^p Type I}, ~\ref{main result1 for x^2=By^p+2^rz^p over K}, which follow from Propositions~\ref{main result2 for x^2=By^p+Cz^p Type I}, ~\ref{main result2 for  x^2=By^p+2^rz^p over K}, respectively, when $S_K^{\prime}=S_K$. Though these propositions can be thought of as an analog of ~\cite[Theorem 5]{M22}, they differ technically in the following two aspects:
    \begin{itemize}
    	\item The proof of~\cite[Theorem 5]{M22} uses the assumption ``$2$ is inert in $K$'', while we relax this assumption to ``$\mfP \in P$ is principal for all $\mfP|2$''.
    	\item In the final steps of the proof of ~\cite[Theorem 5]{M22}, the author needs to study the ramification behavior of a certain extension $L/K$. In our proofs, we need to invoke~\cite[Theorem 9(b)]{FKS20} in Proposition~\ref{main result2 for  x^2=By^p+2^rz^p over K} to study the extension $L/K$, where the discriminant of $L/K$ involves a power of $2$.
 \end{itemize}

	\subsection{Methodology:}
	In this section, we shall explain the methodology that is common 
	in the proof of our main results i.e., Theorems~\ref{main result1 for x^2=By^p+Cz^p Type I},~\ref{main result1 for x^2=By^p+2^rz^p over K} of this article.

	
	For any non-trivial primitive solution $(a, b, c)\in \mcO_{K}^3$ to the equation $x^2=By^p+Cz^p$ (resp., $ 2x^2=By^p+2^rz^p$), consider the Frey elliptic curve $E:=E_{a,b,c}$ as in \eqref{Frey curve for x^2=By^p+Cz^p of Type I}. (resp., \eqref{Frey curve for 2x^2=By^p+Cz^p of Type I}). The following strategy, apart from the technical difficulties, is inspired by the work of Freitas and Siksek in~\cite{FS15}:

	
	\begin{enumerate}
		\item For any non-trivial primitive solution $(a,b,c)\in W_K$ (resp., $ \mcO^3_K$) to the equation $x^2=By^p+Cz^p$ with exponent $p$ of Type I or II (resp., $2x^2=By^p+2^rz^p$), there exists a constant $A:=A_{K,B,C}>0$ (depending on $K,B,C$) such that for primes $p >A$, the Frey elliptic curve $E/K$ is modular (cf. Theorem~\ref{modularity of Frey curve x^2=By^p+Cz^p over W_K}).
		
		The same statement also holds for any non-trivial primitive solution $ (a,b,c)\in \mcO_{K}^3 \setminus S_r \ (\text{with }r=1,2,4,5)$ to $x^2=By^p+Cz^p$ of Type II (cf. Theorem~\ref{modularity of Frey curve of x^2=By^p+2^rz^p over K}).
		
		\item By~\cite[Theorem 2]{FS15 Irred}, for $p \gg 0$, the residual representation $\bar{\rho}_{E,p}$ is irreducible. 

		\item the Frey elliptic curve $E$ has semi-stable reduction away from $S_K^{\prime}$ and satisfies $p | v_\mfq(\Delta_E)$ for $\mfq \notin S_K^{\prime}$ (cf. Theorem~\ref{reduction away from S}).
		
		\item 
	        \begin{enumerate}
	         \item For any non-trivial primitive solution $(a,b,c)\in W_K$ to the equation $x^2=By^p+Cz^p$ with exponent $p \gg 0$ of Type I or II, we get $v_\mfP(j_E) < 0$ and $p \nmid v_\mfP(j_E)$ for $\mfP \in S_K$ (cf. Lemma~\ref{reduction on T and S}).  By (1), (2), (3) and a level lowering result by ~\cite[Theorem 7]{FS15}, there exists a constant $V:=V_{K,B,C}>0$ (depending on $K,B,C$) and an elliptic curve $E'/K$  does not depend on the solution $(a,b,c)$, unlike $E$, such that $\bar{\rho}_{E,p} \sim \bar{\rho}_{E^\prime,p}$ for all $p>V$ (cf. Theorem~\ref{auxilary result x^2=By^p+Cz^p over W_K}). Now, by Lemma~\ref{criteria for potentially multiplicative reduction}, we get $v_\mfP(j_{E^\prime})<0$ for $\mfP \in S_K$. 
	        By using the technique of Mocanu in \cite{M22}, we relate $j_{E'}$ to a solution of~\eqref{keyequation}, together with \eqref{assumption for main result1 for x^2=By^p+Cz^p Type I} to get $v_\mfP(j_{E^\prime}) \geq0$ for some $\mfP \in S_K$, which is a contradiction.

            A similar calculation works for any non-trivial primitive solution $(a,b,c)\in  \mcO_K^3$ to the equation $ 2x^2=By^p+2^rz^p$. 

             \item For any non-trivial primitive solution $ (a,b,c)\in \mcO_{K}^3 \setminus S_r \ (\text{with }r=1,2,4,5)$ to the equation $x^2=By^p+Cz^p$ with exponent $p \gg0$ of Type II, we get either $p | \#\bar{\rho}_{E,p}(I_\mfP)$ or $3 | \#\bar{\rho}_{E,p}(I_\mfP)$ for $\mfP \in U_K$ (cf. Lemma~\ref{reduction on T and S x^2=By^p+Cz^p Type II over K}). 
             Now, arguing as before, there exists an elliptic curve $E'/K$ with $\bar{\rho}_{E,p} \sim \bar{\rho}_{E^\prime,p}$ for all $p>V$ (cf. Theorem~\ref{auxilary result x^2=By^p+2^rz^p over K}). By Lemmas~\ref{criteria for potentially multiplicative reduction}, ~\ref{3 divides discriminant}, we get either $v_\mfP(j_{E^\prime})<0$ or $3\nmid v_\mfP(j_{E^\prime})$ for $\mfP \in U_K$. Then, we relate $j_{E'}$ in terms of solutions of~\eqref{keyequation}, together with~\eqref{assumption for main result1 x^2=By^p+2^rz^p over K}, to get $v_\mfP(j_{E^\prime}) \geq0$ and $3| v_\mfP(j_{E^\prime})$ for some $\mfP \in U_K$, which is a contradiction.

         	 \end{enumerate}

	\end{enumerate}

\subsection{Limitations in~\cite{KS22} and generalizations:}
In this section, we shall mention the limitations of the method in~\cite{KS22} for studying the solutions of $x^p+y^p=z^2$ and explain how to overcome them by employing Mocanu's ideas in~\cite{M22} to $x^2 = By^p+Cz^p$.

\begin{itemize}
 \item The proof of the main result in~\cite[Theorem 5.3]{KS22} for the equation $x^p+y^p=z^2$, depends on some explicit bounds on the solutions of the $S_K$-unit equation and the $S_L$-unit equation, where $L$ varies over certain quadratic extensions of $K$. This helped us to study the asymptotic solutions in $W_K^\prime$, a certain subset of $\mcO_K^3$. 
 
 We could not provide the local criteria of $K$ in~\cite{KS22} for the equation $x^p+y^p=z^2$, as we were working on $S_K$ as well as $S_L$-unit equations, where $L$ varies over certain quadratic extensions of $K$.

 \item On the other hand, the proofs of Theorems~\ref{main result1 for x^2=By^p+Cz^p Type I}, ~\ref{main result1 for x^2=By^p+2^rz^p over K} depend on some explicit bounds on the solutions of \eqref{keyequation} and this aspect gave us an advantage to study the asymptotic solutions of the equation $ x^2=By^p+Cz^p$ in $W_K$ (with $B$ is odd and $C$ is either odd or $2^r$ with $r\in \N$), and in $\mcO_{K}^3 \setminus S_r$ (with $B$ is odd and $C=2^r$ with $r=1,2,4,5$). Note that, in~\cite[Theorem 5.3]{KS22}, the method only works for $W_K^\prime$.

	Since we are working on the solutions of~\eqref{keyequation}, we are able to provide the local criteria of $K$ for Theorems~\ref{main result1 for x^2=By^p+Cz^p Type I}, ~\ref{main result1 for x^2=By^p+2^rz^p over K}.

\item 	
	In Theorem~\ref{auxilary result x^2=By^p+Cz^p over W_K} (resp., ~\cite[Theorem 6.4]{KS22})  we show the existence of an elliptic curve $E'/K$ having a \textbf{non-trivial $2$-torsion point}, having good reduction away from $S_K^{\prime}$ and $v_\mfP(j_{E^\prime})<0$ for $\mfP \in S_K$.
    \begin{itemize}
     \item 	In~\cite{KS22}, we construct a quadratic extension $L$ over $K$ such that $E^\prime/ L$ acquires full $2$-torsion. Then, we relate the $j$-invariant $j_{E'}$ of $E'$ to solutions of $S_K$-unit equation and $S_L$-unit equation to obtain $v_\mfP(j_{E^\prime})\geq 0$ for some $\mfP \in S_K$, 
     which is a contradiction to~\cite[Theorem 6.4]{KS22}. This proves the main result~\cite[Theorem 5.3]{KS22}.
     
     \item In this article, we do not require to look at the solutions of either $S_K$-unit equation or the 
           $S_L$-unit equation, as we relate the $j$-invariant $j_{E'}$ to a solution of~\eqref{keyequation} to get $v_\mfP(j_{E^\prime})\geq 0$ for some $\mfP \in S_K$, 
           which is a contradiction to Theorem~\ref{auxilary result x^2=By^p+Cz^p over W_K}. This proves the main result Theorem~\ref{main result1 for x^2=By^p+Cz^p Type I}.

    \end{itemize}
    \item A similar analysis has been worked out for solutions in $\mcO_K^3 \setminus S_r$ (with $r=1,2,4,5$)
    (cf. Theorem~\ref{main result1 for x^2=By^p+2^rz^p over K}). In this case, we constructed an elliptic curve $E^\prime$ with $v_\mfP(j_{E^\prime})<0$ or $3\nmid v_\mfP(j_{E^\prime})$ for $\mfP \in U_K$ (cf. Theorem~\ref{auxilary result x^2=By^p+2^rz^p over K}).
\end{itemize}    
We end this section with some preliminaries.


%
%

\subsection{Preliminaries:}	
 	\label{section for preliminary}

	 We use the notations $\mcO_K$, $P$, and $\mfn$ to represent the ring of integers, $\mrm{Spec}(\mcO_K)$, and an ideal of $K$, respectively. Let $E/K$ be an elliptic curve of conductor $\mfn$. For any $\mfq \in P$, let $\Delta_\mfq$ be the minimal discriminant of $E$ at $\mfq$. Let 
	
	\begin{equation}
		\label{conductor of elliptic curve}
		\mfm_p:= \prod_{ p|v_\mfq(\Delta_\mfq), \ \mfq ||\mfn} \mfq \text{ and } \mfn_p:=\frac{\mfn}{\mfm_p}.
	\end{equation}
 
	We state a conjecture, which is an extension of the Eichler-Shimura theorem over $\Q$. 
	\begin{conj}[Eichler-Shimura]
		\label{ES conj}
		Let $f$ be a Hilbert modular newform over $K$ of parallel weight $2$, level $\mfn$, and with coefficient field $\Q_f= \Q$. Then, there exists an elliptic curve $E_f /K$ with conductor $\mfn$ having same $L$-function as $f$.
	\end{conj}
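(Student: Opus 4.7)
The plan is to realise the conjectural correspondence via the compatible system of $\ell$-adic Galois representations attached to $f$. By the work of Ohta, Carayol, Taylor, Blasius--Rogawski, Saito, and Skinner--Wiles, to any Hilbert modular newform $f$ over $K$ of parallel weight $2$ and level $\mfn$ one can associate a strictly compatible system $\{\rho_{f,\ell}\}$ of two-dimensional Galois representations $\rho_{f,\ell}\colon \Gal(\overline{K}/K) \to \GL_2(\Q_{f,\lambda})$, unramified outside $\mfn \ell$, with $\det \rho_{f,\ell}$ the cyclotomic character and with $\mathrm{tr}(\rho_{f,\ell}(\Frob_\mfq)) = a_\mfq(f)$ at each $\mfq \nmid \mfn\ell$. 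The hypothesis $\Q_f = \Q$ means these Galois representations take values in $\GL_2(\Q_\ell)$, so they form exactly the kind of system that, by the Fontaine--Mazur--Langlands philosophy, should come from an elliptic curve $E_f/K$.

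The first step would be to write down the compatible system and its local-global compatibility, so that one already obtains an abstract rank-two motive with rational coefficients and with Hodge--Tate weights $\{0,1\}$. The second step would be to construct the sought elliptic curve $E_f$ geometrically. The standard technique, when available, is to transfer $f$ across the Jacquet--Langlands correspondence to a quaternionic automorphic form on a Shimura curve $X^B_K$ for a suitable quaternion algebra $B/K$ ramified at all but one archimedean place, and then extract $E_f$ as the isogeny factor of $\mathrm{Jac}(X^B_K)$ cut out by the Hecke system of $f$; because the coefficient field is $\Q$, this factor is one-dimensional and gives an elliptic curve whose $L$-function matches that of $f$. A conductor calculation, using the local Langlands correspondence at each ramified prime (Carayol's theorem), would then pin down $\mathrm{cond}(E_f) = \mfn$.

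The hard part, and the reason this statement is still a conjecture in full generality, is precisely that the Shimura-curve construction sketched above is unavailable when $[K:\Q]$ is even \emph{and} the automorphic representation $\pi_f$ is a principal series at every finite place of $K$. In that regime there is no quaternion algebra $B/K$ with exactly one split archimedean place that admits a Jacquet--Langlands transfer of $f$, so $\rho_{f,\ell}$ cannot be directly cut out of the cohomology of a PEL Shimura variety. One would have to produce $E_f$ either by an entirely new geometric construction (for example by descent from a congruent form that does admit a Shimura-curve realisation, à la Taylor's potential modularity, and then arguing that the resulting compatible system is the Tate module of an elliptic curve via a suitable case of the Fontaine--Mazur conjecture) or by some $p$-adic or automorphic analytic construction of the missing motive. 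All presently known approaches get stuck at this step, which is why Conjecture~\ref{ES conj} is invoked as a working hypothesis rather than proved.
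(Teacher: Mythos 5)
You were asked to prove a statement that the paper itself does not prove: it is stated as Conjecture~\ref{ES conj} and used only as a working hypothesis (the condition $(ES)$ in Theorem~\ref{main result1 for x^2=By^p+2^rz^p over K} and Proposition~\ref{main result2 for x^2=By^p+2^rz^p over K}), with the paper citing Darmon \cite{D04} for the known cases --- namely that the conjecture holds when $[K:\Q]$ is odd or when $v_\mfq(\mfn)=1$ for some prime $\mfq$ --- and Freitas--Siksek \cite{FS15} for a partial mod-$p$ substitute. Your proposal is honest on exactly this point: you do not claim a proof, and your analysis of where a proof exists and where it breaks down is accurate and matches the literature the paper leans on. The Jacquet--Langlands/Shimura-curve construction you sketch covers precisely Darmon's cases: for $[K:\Q]$ odd one takes the quaternion algebra $B/K$ ramified at all but one archimedean place and at no finite place (an even total number of ramified places), while for $[K:\Q]$ even one needs $\pi_f$ to be discrete series at some finite place, which is guaranteed when $v_\mfq(\mfn)=1$ since the local component is then an unramified twist of Steinberg; Carayol's local-global compatibility then gives the conductor statement. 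Your identification of the genuinely open regime --- $[K:\Q]$ even with $\pi_f$ principal series at every finite place, where no suitable quaternion algebra exists and the compatible system $\{\rho_{f,\ell}\}$ is not visibly cut out of the cohomology of any Shimura curve --- is the correct and standard diagnosis of why this remains conjectural. So there is no gap to report relative to the paper: neither your write-up nor the paper establishes the statement, and your account is a faithful description of the partial results the paper actually invokes. The one caution is rhetorical rather than mathematical: phrases such as ``one already obtains an abstract rank-two motive'' overstate what the compatible system gives unconditionally (one has Galois representations, not a motive, absent Fontaine--Mazur-type input), so if this text were incorporated anywhere it should be framed, as the paper frames it, as a conjecture together with the known special cases rather than as a proof outline with a single missing step.
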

    	In~\cite[Theorem 7.7]{D04}, Darmon showed that Conjecture~\ref{ES conj} holds over $K$, if  either 
    $[K: \Q] $ is odd or there exists some $\mfq \in P$ such that $v_\mfq(\mfn) = 1$. 
    In \cite[Corollary 2.2]{FS15}, Freitas and Siksek provided a partial answer to Conjecture~\ref{ES conj} in terms of mod $p$  Galois representations attached to $E$.

	\section{On the solutions of $x^2=By^p+Cz^p$ over $W_K$ and $2x^2=By^p+2^rz^p$ over $K$}
	\label{notations section for x^2=By^p+Cz^p} 
		\label{notations section for 2x^2=By^p+Cz^p} 	
	In this section, we study the solutions of following Diophantine equations:
	\begin{equation}
		\label{x^2=By^p+Cz^p}
		x^2 =By^p+Cz^p,
	\end{equation} 

\begin{equation}
	\label{2x^2=By^p+Cz^p}
	2x^2 =By^p+Cz^p,
\end{equation} 
	with prime exponent $p\geq 3$ and $B,C \in \Z \setminus \{0\}$. Throughout, we assume that $B$ is odd.
	\begin{itemize}
		\item We say the equation~\eqref{x^2=By^p+Cz^p} (resp., \eqref{2x^2=By^p+Cz^p}) with exponent $p$ is of Type I, if $C$ is odd.
		\item We say the equation~\eqref{x^2=By^p+Cz^p} (resp., \eqref{2x^2=By^p+Cz^p}) with exponent $p$ is of Type II, if $C=2^r$ for some $r\in \N$. 
	\end{itemize}
	For $n \in \Z$, define $S_K(n):= \{ \mfP \in P :\ \mfP|2n \}$.
	Let $S_K:=S_K(1)$, $S_K^{\prime}:= S_K(BC)$ and $U_K:=\{ \mfP \in S_K: (3, v_\mfP(2))=1 \}$.

	\begin{dfn}[Trivial solution]
		We say a solution $(a, b, c)\in \mcO_K^3$ to the equation \eqref{x^2=By^p+Cz^p} (resp., \eqref{2x^2=By^p+Cz^p}) with exponent $p$ is trivial, if $abc=0$.
        We say $(a, b, c)\in \mcO_K^3$ is primitive if the ideal generated by $a, b, c$ in $\mcO_K$ is $\mcO_K$.
	\end{dfn}  

	\begin{dfn}
		\label{def for W_K}
			\label{remark for W_K}
	Let $W_K$ be the set of all non-trivial primitive solutions $(a, b, c)\in  \mcO_K^3$ to the equation~\eqref{x^2=By^p+Cz^p} with exponent $p$ of Type I or II with $\mfP |bc$ for every $\mfP \in S_K$. Note that, for any $\mfP \in S_K$ and $(a, b, c)\in W_K$, $\mfP$ divides exactly one of $b$ and $c$. 
\end{dfn}

	\subsection{Main result}
	\label{section for main result of x^2=By^p+2^rz^p} 
	\label{section for main result of 2x^2=By^p+2^rz^p}
	For any set $S \subseteq P$, let $\mcO_{S}:=\{\alpha \in K : v_\mfP(\alpha)\geq 0 \text{ for all } \mfP \in P \setminus S\}$ be the ring of $S$-integers in $K$ and $\mcO_{S}^*$ be the $S$-units of $\mcO_{S}$.  
    Let $\Cl_S(K):= \mrm{Cl}(K)/\langle [\mfP]\rangle_{\mfP\in S}$ and $\Cl_S(K)[n]$ be its $n$-torsion points. 

	We now show that the equation~\eqref{x^2=By^p+Cz^p} (resp., \eqref{2x^2=By^p+Cz^p}) with exponent $p$ of Type I or II (resp., Type II) has no asymptotic solution in $W_K$ (resp., $\mcO_K^3$). More precisely;
	\begin{thm}
		\label{main result1 for x^2=By^p+Cz^p Type I}
		\label{main result1 for x^2=By^p+Cz^p Type II}
		\label{main result1 for 2x^2=By^p+Cz^p Type I}
		\label{main result1 for 2x^2=By^p+Cz^p Type II}
		Let $K$ be a totally real field with $\Cl_{S_K^{\prime}}(K)[2]=1$. Suppose for every solution $(\alpha, \beta, \gamma) \in \mcO_{S_K^{\prime}}^\ast \times \mcO_{S_K^{\prime}}^\ast \times \mcO_{S_K^{\prime}}$ to  $\alpha+\beta=\gamma^2$,
        there exists $\mfP \in S_K$ that satisfies 
		\begin{equation}
			\label{assumption for main result1 for x^2=By^p+Cz^p Type I}
	\left| v_\mfP \left(\alpha \beta^{-1}\right) \right| \leq 6v_\mfP(2).
		\end{equation}
		Then, the Diophantine equation \eqref{x^2=By^p+Cz^p} (resp., \eqref{2x^2=By^p+Cz^p}) with exponent $p$ of Type I or II (resp., Type II) has no asymptotic solution in $W_K$ (resp., $\mcO_K^3$).
\end{thm}

\begin{remark}
	\label{def for W_K'}
	\label{remark for W_K'}
	There are no non-trivial primitive solutions $(a, b, c)\in  \mcO_K^3$ to the equation $2x^2=By^p+Cz^p$ with exponent $p > [K:\Q]$ with $\mfP |bc$ for every $\mfP \in S_K$, where $B,C$ are odd integers. Indeed, let $(a, b, c)$ be a solution such that $\mfP |bc$ for every $\mfP \in S_K$. Since $B,C$ are odd, $\mfP $ divides both $b$ and $c$, which implies $\mfP^p | Bb^p+Cc^p=2a^2$. Since $p>[K:\Q]$, we have $\mfP |a$, which is a contradiction to the fact that $(a,b,c)$ is primitive. 
\end{remark}

By~\cite[Theorem 39]{M22}, for any finite set $S \subseteq P$, the equation $\alpha+\beta=\gamma^2$ with $(\alpha, \beta, \gamma) \in \mcO_{S}^\ast \times \mcO_{S}^\ast \times \mcO_{S}$ has only finitely many solutions. The following proposition, whose proof will be given at the end of this section, is a consequence of Theorem~\ref{main result1 for x^2=By^p+Cz^p Type I} and will be relevant in \S\ref{section for local criteria for Diophantine equations over $W_K$}. We say that $S \subseteq P$ is principal if $\mfP$ is principal for every $\mfP \in S$.
\begin{prop}
	\label{main result2 for x^2=By^p+Cz^p Type I}
	\label{main result2 for x^2=By^p+Cz^p Type II}
	\label{main result2 for 2x^2=By^p+Cz^p Type I}
	\label{main result2 for 2x^2=By^p+Cz^p Type II}
	Let $K$ be a field such that $S_K^{\prime}=S_K$ is principal and $2 \nmid h_K$. Suppose for every solution $(\alpha, \gamma) \in \mcO_{S_K^{\prime}}^\ast \times \mcO_{S_K^{\prime}}$ to  $\alpha+1=\gamma^2$,
	there exists $\mfP \in S_K$ that satisfies
	\begin{equation}
	\label{assumption for main result2 for x^2=By^p+Cz^p Type I}
	|v_\mfP(\alpha)|\leq 6v_\mfP(2).
	\end{equation} 
Then, the Diophantine equation \eqref{x^2=By^p+Cz^p} (resp., \eqref{2x^2=By^p+Cz^p}) with exponent $p$ of Type I or II (resp., Type II) has no asymptotic solution in $W_K$ (resp., $\mcO_K^3$).
\end{prop}

	\subsection{Steps to prove Theorem~\ref{main result1 for x^2=By^p+Cz^p Type I}}
	\begin{itemize}
		\item For any non-trivial and primitive solution $(a, b, c)\in \mcO_K^3$ to the equation \eqref{x^2=By^p+Cz^p} with exponent $p$, the Frey curve $E:=E_{a,b,c}$ is given by
		\begin{equation}
			\label{Frey curve for x^2=By^p+Cz^p of Type I}
			E:=E_{a,b,c} : Y^2 = X(X^2+2aX+Bb^p),
		\end{equation}
		with $c_4=2^4(Bb^p+4Cc^p),\ \Delta_E=2^{6}(B^2C)(b^2c)^{p}$ and $ j_E=2^{6} \frac{(Bb^p+4Cc^p)^3}{B^2C(b^2c)^{p}}$, where $j_E$ (resp., $\Delta_E$) denote the $j$-invariant (resp., discriminant) of $E$.
		\item For any non-trivial and primitive solution $(a, b, c)\in \mcO_K^3$ to the equation \eqref{2x^2=By^p+Cz^p} with exponent $p$ of Type II, the Frey curve $E:=E_{a,b,c}$ is given by
		
			\begin{equation}
				\label{Frey curve for 2x^2=By^p+Cz^p of Type I}
				E=E_{a,b,c} : Y^2 = X(X^2-4aX+2Bb^p),
			\end{equation}
			with $c_4=2^5(Bb^p+2^{r+2}c^p),\ \Delta_E=2^{9+r}B^2(b^2c)^{p}$ and $ j_E=2^{6-r} \frac{(Bb^p+2^{r+2}c^p)^3}{B^2(b^2c)^{p}}$.
	\end{itemize}
	 We now prove the modularity of the Frey curve $E:=E_{a,b,c}$ in ~\eqref{Frey curve for x^2=By^p+Cz^p of Type I} (resp., ~\eqref{Frey curve for 2x^2=By^p+Cz^p of Type I}) associated to $(a,b,c)\in W_K$ (resp., $ \mcO_K^3$).
	\begin{thm}
		\label{modularity of Frey curve x^2=By^p+Cz^p over W_K}
		Let $(a,b,c)\in W_K$ (resp., $\mcO_K^3$) be a non-trivial primitive solution to the equation~\eqref{x^2=By^p+Cz^p} (resp., \eqref{2x^2=By^p+Cz^p}) with exponent $p$ of Type I or II (resp., Type II). Let $E:=E_{a,b,c}$ be the Frey curve attached to $(a,b,c)$ as in~\eqref{Frey curve for x^2=By^p+Cz^p of Type I} (resp., ~\eqref{Frey curve for 2x^2=By^p+Cz^p of Type I}). Then, there exists a constant $A:=A_{K,B,C}>0$ (depending on $K,B,C$) such that for primes $p >A$, $E/K$ is modular.
	\end{thm}
	
	\begin{proof}
        By ~\cite[Theorem 5]{FLBS15},
        there exist finitely many elliptic curves over $K$ (up to $\bar{K}$-isomorphism) which are not modular. Let $j_1,\ldots,j_t \in K$ be the $j$-invariants of these.
        
        \begin{itemize}
         \item  Suppose $(a,b,c)\in W_K$, and $E:=E_{a,b,c}$ is the Frey curve attached to $(a,b,c)$ as in~\eqref{Frey curve for x^2=By^p+Cz^p of Type I}. Then $j$-invariant $ j_E=2^{6} \frac{(Bb^p+4Cc^p)^3}{B^2C(b^2c)^{p}}=2^{6}\frac{(4-\lambda(E))^3}{\lambda(E)^2}$ for $\lambda(E)= -\frac{Bb^p}{Cc^p}$.
		For $i=1,2,\ldots,t$, the equation $j_E=j_i$ has at most three solutions in $K$. Hence, there exist
		$\lambda_1, \lambda_2, ..., \lambda_m \in K$ with $m\leq 3t$ such that $E$ is modular for all $\lambda(E) \notin\{\lambda_1, \lambda_2, ..., \lambda_m\}$.
		If $\lambda(E)= \lambda_k$ for some $k \in \{1, 2, \ldots, m \}$, then $\left(\frac{b}{c} \right)^p=\frac{-C\lambda_k}{B}$.
		This equation determines $p$ uniquely, denoting it by $p_k$. Suppose $p \neq q$ are primes such that $\left(\frac{b}{c} \right)^p=\left(\frac{b}{c} \right)^q$, which means $\left(\frac{b}{c}\right)$ is a root of unity. Since $K$ is totally real, we get $b=\pm c$. For $\mfP \in S_K$, $\mfP \mid bc$ implies that $\mfP |a$,
		contradicts $(a,b,c) \in W_K$. Now, the proof of the theorem follows by taking $A=\max \{p_1,...,p_m\}$.

		\item Suppose $(a,b,c)\in\mcO_K^3$, and $E:=E_{a,b,c}$ is the Frey curve attached to $(a,b,c)$ as in~\eqref{Frey curve for 2x^2=By^p+Cz^p of Type I}. Then $j_E=2^{6-r} \frac{(Bb^p+2^{r+2}c^p)^3}{B^2(b^2c)^{p}}$.
		Similar to the previous paragraph, there exist $\lambda_k \in K$ with $1 \leq k \leq m$ such that $E/K$ is modular for all $\lambda(E) \notin\{\lambda_1, \lambda_2, ..., \lambda_m\}$.
		If $\lambda(E)= \lambda_k$ for some $k \in \{1, 2, \ldots, m \}$, then $\left(\frac{b}{c} \right)^p= \frac{-2^r\lambda_k}{B}$.
		The above equation determines $p$ uniquely, denoting it by $p_k$. If not, since $K$ is totally real,
		we get $b=\pm c$.
		%
	Since $2a^2= Bb^p+2^rc^p$ for some $r \in \N$ and $B$ is odd, we get $\mfP|b$ for any $\mfP \in S_K$. Since $b=\pm c$, $\mfP^p |Bb^p+2^rc^p=2a^2$. Assume $p > [K: \Q]$. Then we get $\mfP | a$, which contradicts $(a,b,c)$ is primitive. Now the proof of the theorem follows by taking $A=\max \{p_1,...,p_m, [K: \Q]+1\}$.
        \end{itemize}

	\end{proof}

	\subsection{Reduction type}

	The following lemma characterizes the type of reduction of the Frey curve $E:= E_{a,b,c}$ in~\eqref{Frey curve for x^2=By^p+Cz^p of Type I} (resp., ~\eqref{Frey curve for 2x^2=By^p+Cz^p of Type I}) 
	at primes $\mfq$ away from $S_K^{\prime}$.
	\begin{lem}
		\label{reduction away from S}
		Let $(a,b,c) \in \mcO_K^3$ be a non-trivial primitive solution to the equation~\eqref{x^2=By^p+Cz^p} (resp., \eqref{2x^2=By^p+Cz^p}) with exponent $p$ of Type I or II (resp., Type II). Let $E$ be the Frey curve attached to $(a,b,c)$ as in~\eqref{Frey curve for x^2=By^p+Cz^p of Type I} (resp., ~\eqref{Frey curve for 2x^2=By^p+Cz^p of Type I}). Then at all primes $\mfq \notin S_K^{\prime}$, $E$ is minimal, semi-stable at $\mfq$ and satisfies $p | v_\mfq(\Delta_E)$. Let $\mfn$ be the conductor of $E$, and $\mfn_p$ be as in \eqref{conductor of elliptic curve}. Then,
			\begin{equation}
				\label{conductor of E and E' x^2=By^p+Cz^p Type I}
				\mfn=\prod_{\mfP \in S_K^{\prime}}\mfP^{r_\mfP} \prod_{\mfq|bc,\ \mfq \notin S_K^{\prime}}\mfq,\ \mfn_p=\prod_{\mfP \in S_K^{\prime}}\mfP^{r_\mfP^{\prime}},
			\end{equation}
			where $0 \leq r_\mfP^{\prime} \leq r_\mfP $ with $r_\mfP 
			\leq 2+6v_\mfP(2)$ for $\mfP |2$ and $ r_\mfP\leq 2+3v_\mfP(3)$ for $\mfP \nmid 2$.
\end{lem}
	
	\begin{proof} We give a proof of this lemma in two cases, as we did previously.

     \begin{itemize}
      \item Suppose $(a,b,c) \in \mcO_K^3$ is a non-trivial primitive solution to the equation~\eqref{x^2=By^p+Cz^p} of Type I or II. Then $c_4=2^4(Bb^p+4Cc^p)$ and $\Delta_E=2^{6}B^2C(b^2c)^{p}. $

	Let $\mfq \in P \setminus S_K^\prime$. If $\mfq \not|\Delta_E$, then $E$ has good reduction at $\mfq$ and $p | v_\mfq(\Delta_E)=0$.
        If $\mfq|\Delta_E$, then $\mfq$ divides precisely one of $b$ and $c$, since $(a,b,c)$ is primitive and $\mfq \nmid 2BC$. This implies that $\mfq\nmid c_4$, hence $E$ is minimal and has multiplicative reduction at $\mfq$. Since $v_\mfq(\Delta_E)=p v_\mfq(b^2c) $, $p | v_\mfq(\Delta_E)$. By the definition of $\mfn_p$ in~\eqref{conductor of elliptic curve}, we get $\mfq \nmid \mfn_p$ for all $\mfq \notin S_K^{\prime}$. Finally, for $\mfP \in S_K^{\prime}$, the bounds on $r_\mfP$ follow from \cite[Theorem IV.10.4]{S94}.

      \item
        Suppose $(a,b,c) \in \mcO_K^3$ is a non-trivial primitive solution to the equation~\eqref{2x^2=By^p+Cz^p} of Type II. Then $c_4=2^5(Bb^p+2^{r+2}c^p)$ and $ \Delta_E=2^{9+r}B^2(b^2c)^{p}$. The rest of the argument is similar to the case above. Hence, the proof of the lemma follows.
     \end{itemize}
	\end{proof}
%

	\subsubsection{Type of reduction with image of inertia}
    For any elliptic curve $E/K$, let $\bar{\rho}_{E,p} : G_K:=\Gal(\overline{K}/K) \rightarrow \mathrm{Aut}(E[p]) \simeq \GL_2(\F_p)$ be the residual Galois representation of $G_K$, induced by the action of $G_K$ on  $E[p]$, the $p$-torsion of $E$.
    We first recall ~\cite[Lemmas 3.4, 3.6]{FS15}, which will be useful for determining the types of the reduction of the Frey curve at $\mfP \in P$. 
	\begin{lem}
		\label{criteria for potentially multiplicative reduction}
		Let $E/K$ be an elliptic curve and $p>5$ be a prime. For $\mfq \in P$ with $\mfq \nmid p$, $E$ has potentially multiplicative reduction at $\mfq$ and $p \nmid v_\mfq(j_E)$ if and only if $p | \# \bar{\rho}_{E,p}(I_\mfq)$.
	\end{lem}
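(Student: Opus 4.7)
The plan is to combine the classification of reduction types at $\mfq$ with Tate uniformization in the potentially multiplicative case. The key input is that, since $\mfq \nmid p$, the wild inertia subgroup of $I_\mfq$ is a pro-$\ell$ group with $\ell \ne p$, so its image in $\bar{\rho}_{E,p}(I_\mfq) \subseteq \GL_2(\F_p)$ has order prime to $p$. Hence any order-$p$ element in the inertia image is necessarily detected on tame inertia.

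For the reverse direction, suppose $p \mid \#\bar{\rho}_{E,p}(I_\mfq)$. By the N\'eron--Ogg--Shafarevich criterion, $E$ cannot have good reduction at $\mfq$, since this would force $\bar{\rho}_{E,p}$ to be unramified there. If $E$ had potentially good but bad reduction, then $E$ would acquire good reduction over a finite extension $L/K_\mfq$ of universally bounded inertial degree (at most $24$ by the Kodaira--N\'eron tables), and for $p > 5$ this forces $\bar{\rho}_{E,p}(I_\mfq)$ to have order coprime to $p$, a contradiction. Therefore $E$ has potentially multiplicative reduction at $\mfq$. To obtain $p \nmid v_\mfq(j_E)$, pass to the at-most-quadratic extension $L/K_\mfq$ over which $E$ becomes a Tate curve with parameter $q_E$ satisfying $v_L(q_E) = -v_L(j_E)$; the $p$-torsion then sits in an exact sequence $0 \to \mu_p \to E[p] \to \F_p \to 0$ and is split by $q_E^{1/p}$. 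Since $\mfq \nmid p$, the subfield $L(\mu_p)$ is unramified over $L$, so the ramification of $L(E[p])/L$ is carried entirely by the Kummer extension $L(q_E^{1/p})/L$, which is ramified of degree exactly $p$ iff $p \nmid v_L(q_E)$, i.e.\ iff $p \nmid v_\mfq(j_E)$.

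For the forward direction, assume $E$ has potentially multiplicative reduction at $\mfq$ and $p \nmid v_\mfq(j_E)$. The same Tate-curve computation over $L$ produces a ramified Kummer extension $L(q_E^{1/p})/L$ of degree $p$, so $\bar{\rho}_{E,p}(I_L)$ contains an element of order $p$. Since $[L:K_\mfq] \le 2$ and $p$ is odd, this order-$p$ element survives in $\bar{\rho}_{E,p}(I_{K_\mfq})$, giving $p \mid \#\bar{\rho}_{E,p}(I_\mfq)$.

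The main technical hurdle will be making precise the interplay between the quadratic twist needed to pass from potentially multiplicative to multiplicative reduction and the image of inertia on $E[p]$. The guiding principle is that twisting by a quadratic character alters $\bar{\rho}_{E,p}$ only by a character of order at most $2$, which, for odd $p$, neither creates nor destroys order-$p$ elements in the inertia image; and restricting from $I_{K_\mfq}$ to the index-$\le 2$ subgroup $I_L$ cannot change divisibility by $p$ of the image order. Once this is pinned down, both implications reduce to the standard Tate curve calculation together with the classical bounds on the inertial Galois action in the potentially good additive case.
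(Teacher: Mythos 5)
Your proposal is mathematically sound, but there is no proof in the paper to compare it against: the lemma is recalled verbatim from \cite[Lemmas 3.4, 3.6]{FS15}, where it is in turn deduced from the theory of the Tate curve. What you have written is a reconstruction of the standard argument behind that citation, and it is essentially correct: N\'eron--Ogg--Shafarevich (valid since $\mfq \nmid p$) rules out good reduction; in the potentially good case $\bar{\rho}_{E,p}(I_\mfq)$ is a quotient of the inertia group of the extension over which $E$ acquires good reduction, whose order divides $24$ and hence is prime to $p$ for $p>3$ --- though your phrase ``inertial degree at most $24$'' should read ``ramification index'' or ``order of the inertia subgroup,'' since inertial degree standardly means the residue degree, which is irrelevant here; and in the potentially multiplicative case the Tate parametrization over the at-most-quadratic extension $L$ reduces everything to the extension $L(\mu_p, q_E^{1/p})/L$, whose ramification is governed by the Kummer layer because $\mu_p$ is unramified over $L$. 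Two steps deserve to be made explicit. First, $v_L(q_E) = -\,e(L/K_\mfq)\, v_\mfq(j_E)$ with $e(L/K_\mfq) \in \{1,2\}$, so passing from $p \nmid v_L(q_E)$ to $p \nmid v_\mfq(j_E)$ uses that $p$ is odd; you invoke oddness for the group-theoretic restriction but pass over it at this valuation step. Second, in the reverse direction you need the full converse: when $p \mid v_L(q_E)$, the entire extension $L(E[p])/L$ is unramified (both $\mu_p$ and $q_E^{1/p}$ then generate unramified extensions), so $\bar{\rho}_{E,p}(I_\mfq)$ has order dividing $2$; ramifiedness of the Kummer layer alone only gives the forward implication. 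Your bookkeeping for the quadratic twist and for restricting from $I_\mfq$ to the index-at-most-$2$ subgroup $I_L$ (an odd prime divides $\#\bar{\rho}_{E,p}(I_\mfq)$ if and only if it divides $\#\bar{\rho}_{E,p}(I_L)$) is exactly right. In short, your route buys self-containedness where the paper's citation buys brevity; there is no conflict between the two.
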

	\begin{lem}
		\label{3 divides discriminant}
		Let $E/K$ be an elliptic curve and $p\geq 3$ be a prime.
		Suppose $E$ has potential good reduction at $\mfP$ for some $\mfP \in S_K$. Then, $3 \nmid v_\mfP(\Delta_E)$ if and only if $3 | \#\bar{\rho}_{E,p}(I_\mfP)$.
	\end{lem}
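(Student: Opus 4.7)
The plan is to treat this as a purely local statement at $\mfP$, since both sides of the equivalence are local invariants. Since $p \geq 3$ and $\mfP \in S_K$ lies above $2$, we have $\mfP \nmid p$, so the Néron--Ogg--Shafarevich criterion combined with the potential good reduction hypothesis guarantees that $\bar\rho_{E,p}(I_\mfP)$ is finite. By the theorem of Serre--Tate, its order equals the ramification index of the minimal extension of $K_\mfP^{\mathrm{ur}}$ over which $E$ acquires good reduction, and this ramification index depends only on the Kodaira type of $E$ at $\mfP$.

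Next, I would invoke the N\'eron--Kodaira classification: the seven Kodaira types with additive but potentially good reduction, namely $II, III, IV, I_0^*, IV^*, III^*, II^*$, give tame ramification indices $6, 4, 3, 2, 3, 4, 6$ respectively. Direct inspection shows that $3$ divides this tame index exactly when the Kodaira type is one of $II, IV, IV^*, II^*$. On a minimal Weierstrass model in the tame case (residue characteristic $>3$), these seven types correspond to $v_\mfP(\Delta_E) \equiv 2, 3, 4, 6, 8, 9, 10 \pmod{12}$, and again $3 \nmid v_\mfP(\Delta_E)$ precisely for the four types $II, IV, IV^*, II^*$. This matches the equivalence in both directions and establishes the lemma in the tame setting.

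The main obstacle is the wild contribution arising from $\mfP \mid 2$: both the inertia image and $v_\mfP(\Delta_E)$ acquire wild corrections via Ogg's formula $v_\mfP(\Delta_E) = f_\mfP + m_\mfP - 1$. For the inertia image this is harmless, since the wild inertia sits in a pro-$2$ subgroup and contributes nothing to $3$-divisibility of $\#\bar\rho_{E,p}(I_\mfP)$. For the discriminant exponent, a careful case-by-case analysis of the wild conductor contribution at a prime above $2$ is required to verify that $v_\mfP(\Delta_E) \bmod 3$ remains governed by the Kodaira type. This is the genuinely delicate step, but it is carried out in \cite[Lemma 3.6]{FS15}, which the authors simply invoke; an alternative route would be to compute $\#\bar\rho_{E,p}(I_\mfP)$ directly via the Kraus tables for elliptic curves over $2$-adic fields with potential good reduction and compare with $v_\mfP(\Delta_E)$ in each Kodaira type.
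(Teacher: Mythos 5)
You have in fact matched the paper, because the paper does not prove this lemma at all: it is recalled verbatim from \cite[Lemma 3.6]{FS15} (see the sentence preceding the statement, ``We first recall [FS15, Lemmas 3.4, 3.6]''), and your final step likewise defers to that reference. So as a comparison with the paper's treatment, your proposal agrees with it --- both are, in the end, a citation.

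Two caveats, however, on the sketch you built around that citation. First, by definition $S_K = S_K(1) = \{\mfP \in P : \mfP \mid 2\}$, so every prime $\mfP$ in the statement has residue characteristic $2$; your Kodaira-type computation in residue characteristic $>3$ (indices $6,4,3,2,3,4,6$ and $v_\mfP(\Delta_E) \equiv 2,3,4,6,8,9,10 \pmod{12}$), while correct as stated, is vacuous for this lemma, since the ``tame setting'' you claim to settle never occurs; at a prime above $2$ neither the semistability defect nor $v_\mfP(\Delta_E) \bmod 12$ is determined by the Kodaira type. Second, read as a standalone proof, your argument is circular at the decisive point: the ``genuinely delicate step'' you outsource to \cite[Lemma 3.6]{FS15} \emph{is} the statement being proved. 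What a genuine argument needs is exactly the residue-characteristic-$2$ analysis, e.g.\ via Kraus's classification of the semistability defect $e = \#\bar{\rho}_{E,p}(I_\mfP) \in \{2,3,4,6,8,24\}$ over $2$-adic fields, combined with your two correct observations (wild inertia is pro-$2$, so only the tame cyclic quotient can carry an element of order $3$; and reduction mod $p$ is injective on the finite inertia image for $p \geq 3$, so $\#\bar{\rho}_{E,p}(I_\mfP)$ really is the degree of the minimal good-reduction extension of $K_\mfP^{\mathrm{ur}}$). Alternatively, one can argue directly: $K_\mfP^{\mathrm{ur}}(\Delta_E^{1/3}) \subseteq K_\mfP^{\mathrm{ur}}(E[3])$, and $K_\mfP^{\mathrm{ur}}(\Delta_E^{1/3})/K_\mfP^{\mathrm{ur}}$ is ramified of degree $3$ precisely when $3 \nmid v_\mfP(\Delta_E)$, which yields both implications since the residue characteristic is $2$. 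In short: relative to the paper your proposal takes the same route, but the only case of the lemma that actually arises is the one your sketch leaves entirely to the cited reference.
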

	
   The following lemma determines the type of reduction of $E_{a,b,c}$ at primes $\mfq \nmid 2pBC$.
	\begin{lem}
		\label{Type of reduction at q away from 2,p,B x^2=By^p+Cz^p}
		Let $(a,b,c)\in \mcO_K^3$ be a non-trivial primitive solution to the equation~\eqref{x^2=By^p+Cz^p} (resp., \eqref{2x^2=By^p+Cz^p}) with exponent $p>5$ of Type I or II (resp., Type II), and let $E$ be the associated Frey curve. Suppose $\mfq \in P$ with $\mfq\nmid 2pBC$. Then $p \nmid \#\bar{\rho}_{E,p}(I_\mfq)$.
 	\end{lem}
	
	\begin{proof}
		Suppose $(a,b,c)\in \mcO_K^3$ is a non-trivial primitive solution to the equation~\eqref{x^2=By^p+Cz^p} of Type I or II.
		By Lemma~\ref{criteria for potentially multiplicative reduction}, it is enough to show that
		either $v_\mfq(j_E) \geq 0$ or $p | v_\mfq(j_E)$. Recall that $\Delta_E=2^{6}(B^2C)(b^2c)^{p}$ and $c_4=2^4(Bb^p+4Cc^p)$. 
		If $\mfq \nmid \Delta_E$, then $E$ has good reduction at $\mfq$, and hence $v_\mfq(j_E)\geq 0$. If $\mfq | \Delta_E$ then $\mfq |bc$,
		and hence $\mfq$ divides exactly one of $b$ and $c$. Therefore, $\mfq \nmid c_4$ and $p |v_\mfq(j_E)=-pv_\mfq(b^2c)$, which completes the proof.

	    Suppose $(a,b,c)\in \mcO_K^3$ is a non-trivial primitive solution to the equation~\eqref{2x^2=By^p+Cz^p} of Type II. Then, the proof follows if we argue along the lines of the case above.
	\end{proof}
	
	We discuss the type of reduction of $E_{a,b,c}$ in~\eqref{Frey curve for x^2=By^p+Cz^p of Type I} (resp., ~\eqref{Frey curve for 2x^2=By^p+Cz^p of Type I}) at $\mfP \in S_K$.
	\begin{lem}
		\label{reduction on T and S}
		Let $(a,b,c)\in W_K$ (resp., $\mcO_K^3$) be a non-trivial primitive solution to the equation~\eqref{x^2=By^p+Cz^p} (resp., ~\eqref{2x^2=By^p+Cz^p}) with exponent $p > 6v_\mfP(2)+v_\mfP(C)$ (resp., $p > \max \{ (6+r)v_\mfP(2),\ [K : \Q]\}$) of Type I or II (resp., Type II). Let $E:=E_{a,b,c}$ be the associated Frey curve.
		For $\mfP \in S_K$, we have $\ v_\mfP(j_E) < 0$ and $p \nmid v_\mfP(j_E)$, equivalently $p | \#\bar{\rho}_{E,p}(I_\mfP)$.
    \end{lem} 
    \begin{proof}
We now give a proof of this lemma in two cases, as we did previously.
    \begin{itemize}
         \item Let $(a,b,c)\in W_K$ be a solution to the equation~\eqref{x^2=By^p+Cz^p}. Then $\mfP$ divides exactly one of $b$ and $c$.

    Suppose the equation~\eqref{x^2=By^p+Cz^p} with exponent $p$
    is of Type I. Recall that $j_E=2^{6} \frac{(Bb^p+4Cc^p)^3}{B^2C(b^2c)^{p}}$. If $\mfP |b$, then $\mfP \nmid c$.
    Since $p > 6v_\mfP(2)$, $v_\mfP(j_E)= 6v_\mfP(2)+6v_\mfP(2)-2pv_\mfP(b)= 2(6v_\mfP(2)-pv_\mfP(b))$, $v_\mfP(j_E) <0$ and $p \nmid v_\mfP(j_E)$. Similar proof works for $\mfP |c$ as well.

    Suppose the equation~\eqref{x^2=By^p+Cz^p} with exponent $p$
    is of Type II, i.e., $C=2^r$ for some $r \in \N$. If $\mfP |b$, then $v_\mfP(j_E)=  (6-r)v_\mfP(2)+3(r+2)v_\mfP(2)-2pv_\mfP(b)= (12+2r)v_\mfP(2)-2pv_\mfP(b) <0$ and $p \nmid v_\mfP(j_E)$, since $p > (6+r)v_\mfP(2)$. If $\mfP |c$ then $v_\mfP(j_E)=  (6-r)v_\mfP(2)-pv_\mfP(c) <0$ since $p > (6+r)v_\mfP(2)$.
		 Since $p > (6+r)v_\mfP(2)> (6-r)v_\mfP(2) \geq0$ for $1 \leq r\leq 6$ and $-p< (-6-r)v_\mfP(2)<(6-r)v_\mfP(2)<0$ for $r> 6$, we get $p \nmid v_\mfP(j_E)$.
		Hence, by Lemma~\ref{criteria for potentially multiplicative reduction}, we get $p | \#\bar{\rho}_{E,p}(I_\mfP)$.

\item Let $(a,b,c)\in \mcO_K^3$ be a non-trivial primitive solution to the equation~\eqref{2x^2=By^p+Cz^p} with exponent $p$ of Type II, and let $\mfP \in S_K$. Then $\mfP |b$. Since $p>[K: \Q] $,   $\mfP \nmid c$. Recall $j_E=2^{6-r} \frac{(Bb^p+2^{r+2}c^p)^3}{B^2(b^2c)^{p}}$. Since $p > (6+r)v_\mfP(2)$, $v_\mfP(j_E)= (12+2r)v_\mfP(2)-2pv_\mfP(b) <0$ and $p \nmid v_\mfP(j_E)$.
Hence, by Lemma~\ref{criteria for potentially multiplicative reduction}, we get $p | \#\bar{\rho}_{E,p}(I_\mfP)$.

    \end{itemize}

	\end{proof}

	\subsection{Proof of Theorem~\ref{main result1 for x^2=By^p+Cz^p Type I}.}
%
The proof of this theorem depends on the following result.	
	\begin{thm}
		\label{auxilary result x^2=By^p+Cz^p over W_K}
		Let $K$ be a totally real field. Then, there is a constant $V:=V_{K,B,C}>0$ (depending on $K,B,C$) such that the following holds.
		Let $(a,b,c)\in W_K$ (resp., $ \mcO_K^3$) be a non-trivial primitive solution to the equation~\eqref{x^2=By^p+Cz^p} (resp., ~\eqref{2x^2=By^p+Cz^p}) with exponent $p>V$ of Type I or II (resp., Type II). Let $E$ be the Frey curve as in \eqref{Frey curve for x^2=By^p+Cz^p of Type I} (resp., \eqref{Frey curve for 2x^2=By^p+Cz^p of Type I}). Then, there exists an elliptic curve $E^\prime/K$ such that:
		\begin{enumerate}
			\item $E^\prime$ has good reduction away from $S_K^{\prime}$ and has a non-trivial $2$-torsion point;
			\item $\bar{\rho}_{E,p} \sim\bar{\rho}_{E^\prime,p}$, and  $v_\mfP(j_{E^\prime})<0$ for $\mfP \in S_K$.
		\end{enumerate}
	\end{thm}

    \begin{proof}[Proof of Theorem~\ref{auxilary result x^2=By^p+Cz^p over W_K}] 
		By Theorem~\ref{modularity of Frey curve x^2=By^p+Cz^p over W_K}, $E$ is modular for primes $p>A:=A_{K,B,C}$ with $A \gg 0$. By Lemma~\ref{reduction away from S}, $E$ is semi-stable away from $S_K^{\prime}$. If necessary, we can take the Galois closure of $K$ to ensure that $\bar{\rho}_{E,p}$ is irreducible for $p \gg 0$ (cf.~\cite[Theorem 2]{FS15 Irred}).
		
			By ~\cite[Theorem 7]{FS15}, there exists a Hilbert
        modular newform $f$ of parallel weight $2$, level $\mfn_p$ and some prime $\omega$ of $\Q_f$ such that $\omega | p$ and $\bar{\rho}_{E,p} \sim \bar{\rho}_{f,\omega}$ for $p \gg 0$, where $\bar{\rho}_{f,\omega}$ denotes the residual Galois representation attached to $f,\omega$. By allowing $p$ to be sufficiently large, we can assume  $\Q_f=\Q$ (cf. ~\cite[\S 4]{FS15} for more details).

		Let $\mfP \in S_K$. Then $E$ has potential multiplicative reduction at $\mfP$ and $p | \#\bar{\rho}_{E,p}(I_\mfP)$ for $p \gg 0$
		(cf. Lemma~\ref{reduction on T and S}). The existence of $E_f$ then follows from \cite[Corollary 2.2]{FS15} for all $p\gg 0$ after leaving primes $p$ with $p \mid \left( \text{Norm}(K/\Q)(\mfP) \pm 1 \right)$. Therefore, $\bar{\rho}_{E,p} \sim \bar{\rho}_{E_f,p}$ for some elliptic curve $E_f$ with conductor $\mfn_p$ for $p>V:=V_{K,B,C}$, where $V_{K,B,C}$ is the maximum of all the above implicit/explicit lower bounds.
		
		\begin{itemize}
			\item Since the conductor of $E_f$ is $\mfn_p$ given in \eqref{conductor of E and E' x^2=By^p+Cz^p Type I}, $E_f$ has good reduction away from $S_K^{\prime}$. Now, arguing as in~\cite[page 1247]{M22}, we can enlarge the constant $V$ and by possibly replacing $E_f$ with an isogenous curve, say $E^\prime$, we get $E^\prime/ K$ has a non-trivial $2$-torsion point. Since $E_f \sim E^\prime$, $E^\prime$ has good reduction away from $S_K^{\prime}$. 

			\item Since $E_f$ is isogenous to $E^\prime$ and $\bar{\rho}_{E,p} \sim \bar{\rho}_{E_f,p}$, we get $\bar{\rho}_{E,p} \sim \bar{\rho}_{E^\prime,p}$. As a result, we obtain $p | \# \bar{\rho}_{E,p}(I_\mfP)=\# \bar{\rho}_{E^\prime,p}(I_\mfP)$ for any $\mfP \in S_K$. Finally, by Lemma~\ref{criteria for potentially multiplicative reduction}, we have $v_\mfP(j_{E^\prime})<0$ for any $\mfP \in S_K$.
			\end{itemize}
		This completes the proof of the theorem.
	\end{proof}
	We now present our proof of Theorem~\ref{main result1 for x^2=By^p+Cz^p Type I}, which is inspired from that of~\cite[Theorem 3]{M22}.
	\begin{proof}[Proof of Theorem~\ref{main result1 for x^2=By^p+Cz^p Type I}]
		Suppose $(a,b,c)\in W_K$ (resp., $ \mcO_K^3$) is a non-trivial primitive solution to the equation~\eqref{x^2=By^p+Cz^p} (resp., ~\eqref{2x^2=By^p+Cz^p}) with exponent $p>V$ of Type I or Type II (resp., Type II), where $V:=V_{K,B,C}$ is the constant as in Theorem~\ref{auxilary result x^2=By^p+Cz^p over W_K}. By Theorem~\ref{auxilary result x^2=By^p+Cz^p over W_K}, there exists an elliptic curve $E^\prime/K$ having a non-trivial $2$-torsion point and good reduction away from $S_K^{\prime}$.
Then the elliptic curve $E^\prime/K$ has a model of the form
		\begin{equation}
			\label{j invariant of E'}
		E^\prime:y^2=x^3+cx^2+dx
		\end{equation}		
	for some $c,d \in K$, with $j$-invariant $j_{E^\prime}=2^8 \frac{(c^2-3d)^3}{d^2(c^2-4d)}$. Since $E^\prime$ has good reduction away from $S_K^{\prime}$, we have $j_{E^\prime} \in \mcO_{S_K^{\prime}}$. 
	
	Take $\lambda := \frac{c^2}{d}$ and $\mu := \lambda-4 \in \mcO_{S_K^{\prime}}^\ast$
	(cf.~\cite[Lemma 16(i)]{M22}). By~\cite[Lemma-17(i)]{M22},
	we get $ \lambda  \mcO_K=I^2J$ for some fractional ideal $I$ and $S_K^{\prime}$-ideal $J$. Since $J$ is a $S_K^{\prime}$-ideal, we have $1= [I]^2 \in \Cl_{S_K^{\prime}}(K)$. By hypothesis $\Cl_{S_K^{\prime}}(K)[2]=1$ which gives $I=\gamma I_1$ for some
	$\gamma \in \mcO_K$ and $S_K^{\prime}$-ideal $I_1$. Thus, $\lambda \mcO_K=\gamma^2I_1^2J$ and hence $(\frac{\lambda}{\gamma^2}) \mcO_K$ is an $S_K^{\prime}$-ideal. Therefore, $u=\frac{\lambda}{\gamma^2} \in \mcO_{S_K^{\prime}}^\ast$.
	Now, divide the equation $\mu +4=\lambda$ by $u$ to obtain 
	$\alpha +\beta =\gamma^2$, where $\alpha= \frac{\mu}{u} \in \mcO_{S_K^{\prime}}^\ast$ and $\beta =\frac{4}{u} \in \mcO_{S_K^{\prime}}^\ast$, which implies $ \alpha \beta^{-1}=\frac{\mu}{4}$.
	By~\eqref{assumption for main result1 for x^2=By^p+Cz^p Type I}, there exists $\mfP \in S_K$ with $|v_\mfP(\alpha \beta^{-1})|= |v_\mfP(\frac{\mu}{4} )| \leq 6 v_\mfP(2)$. This means
	\begin{equation}
		\label{inequality for valution of mu}
		-4v_\mfP(2) \leq v_\mfP(\mu) \leq 8 v_\mfP(2).
	\end{equation}
    We now show that the bounds on $v_\mfP(\mu)$ would imply that $v_\mfP(j_{E^\prime}) \geq 0$. 
	Write $j_{E^\prime}$ in terms of $\mu$ yields $j_{E^\prime}= 2^8 \frac{(\mu+1)^3}{\mu}$,
	which means $v_\mfP(j_{E^\prime})=8v_\mfP(2)+3 v_\mfP(\mu +1)-v_\mfP(\mu)$. 
\begin{itemize}
 \item If $v_\mfP(\mu) <0$, then $v_\mfP(\mu +1)=v_\mfP(\mu)$. 
       By \eqref{inequality for valution of mu}, we get $v_\mfP(j_{E^\prime})\geq 0$.
 \item  If $v_\mfP(\mu)=0$, then $v_\mfP(\mu +1)\geq 0$, hence $v_\mfP(j_{E^\prime})\geq 8v_\mfP(2) \geq 0$.
 \item  If $v_\mfP(\mu) >0$, then $v_\mfP(\mu +1)=0$.
        By \eqref{inequality for valution of mu}, we get $v_\mfP(j_{E^\prime})=8v_\mfP(2)- v_\mfP(\mu) \geq 0$. 
\end{itemize}
In all cases, we get $v_\mfP(j_{E^\prime}) \geq 0$, which is a contradiction to Theorem~\ref{auxilary result x^2=By^p+Cz^p over W_K}. This completes the proof of the theorem.
\end{proof}
\begin{proof}[Proof of Proposition~\ref{main result2 for x^2=By^p+Cz^p Type I}]
	By Theorem~\ref{main result1 for x^2=By^p+Cz^p Type I}, it suffices to show that for every solution $(\alpha, \beta, \gamma)\in  \mcO_{S_K}^\ast \times \mcO_{S_K}^\ast \times \mcO_{S_K}$ to the equation $\alpha+\beta=\gamma^2$, there exists $\mfP\in S_K$ such that $|v_\mfP(\alpha\beta^{-1}) |\leq 6v_\mfP(2)$. 
	Let $\mfP \in S_K$. If necessary, by scaling even powers of $\mfP$ and swapping $\alpha, \beta$, we can assume $0\leq v_\mfP(\beta)\leq v_\mfP(\alpha)$ with $v_\mfP(\beta)=0$ or $1$. 
	\begin{enumerate}
		\item Suppose $v_\mfP(\beta)=1$ for some $\mfP \in S_K$. If $v_\mfP(\alpha)>6v_\mfP(2)>1$, then $v_\mfP(\gamma^2)=v_\mfP(\alpha+\beta)=v_\mfP(\beta)=1$, which cannot occur since $v_\mfP(\gamma^2)$ is even. The inequality $v_\mfP(\alpha)\leq 6v_\mfP(2)$ implies 
		$\left|v_\mfP(\alpha \beta^{-1}) \right|\leq 6v_\mfP(2)-1 <6v_\mfP(2)$.
		
		\item Suppose $v_\mfq(\beta)=0$ for all $\mfq \in S_K$, i.e., $\beta$ is a unit in $K$.
		\begin{itemize}
		 \item If $\beta$ is a square, then divide the equation $\alpha +\beta= \gamma^2$ by $\beta$ to obtain an equation of the form $\alpha'+1=\gamma'^2$, where $\alpha'=  \alpha \beta^{-1} \in \mcO_{S_K^{\prime}}^\ast$ and $\gamma' \in \mcO_{S_K^{\prime}}$. By~\eqref{assumption for main result2 for x^2=By^p+Cz^p Type I}, we obtain $|v_\mfP(\alpha \beta^{-1}) |=|v_\mfP(\alpha')|\leq 6v_\mfP(2)$ for some $\mfP \in S_K$.

		 \item Suppose $\beta$ is not a square. If $v_\mfP(\alpha)\leq 6v_\mfP(2)$ for some $\mfP \in S_K$, then we are done. Otherwise, $v_\mfq(\alpha)> 6v_\mfq(2)>1$ for all $\mfq \in S_K$. This gives $\alpha \equiv 0 \pmod {2^6}$ and $\gamma^2=\alpha+\beta \equiv \beta\pmod {2^6}$.
		 Since $v_\mfq(\gamma^2)= v_\mfq(\alpha+\beta)=0$ for all $\mfq \in S_K$ and $S_K=S_K^\prime$, we get $\gamma \in \mcO_K$. 

		 Now, consider the field $L=K(\theta)$, where $\theta:= \frac{\gamma - \sqrt{\beta}}{2}$. The minimal polynomial of $\theta$ is $m_\theta(x)= x^2-\gamma x+  \frac{\gamma^2 - \beta}{4}$. Then $m_\theta(x) \in \mcO_K[x]$ with discriminant $\beta$. 
		 Therefore, $L$ is an everywhere unramified extension of degree 2 over $K$, implying $2| h_K$, which contradicts our hypothesis that $2\nmid h_K$.
        \end{itemize}
    \end{enumerate}
This completes the proof of the proposition.
\end{proof}

\section{Solutions of the Diophantine equation $x^2=By^p+2^rz^p$ over $K$}
\label{section for $x^2=By^p+2^rz^p$ and $2x^2=By^p+2^rz^p$ over $K$} 
In this section, we examine the solutions of the Diophantine equation \eqref{x^2=By^p+Cz^p} with exponent $p$ of Type II, i.e., $x^2 =By^p+2^rz^p$ over $K$. Here, $S_K^{\prime}=\{\mfP \in P : \mfP |2B \}$. Let $h_K^+$ be the narrow class number of $K$. We follow notation as in \S\ref{notations section for x^2=By^p+Cz^p}.
\subsection{Main result}
We write $(ES)$ for ``either $[K: \Q]\equiv 1 \pmod 2$ or Conjecture \ref{ES conj} holds for $K$".
For $r\in \N$, let $S_r:=\{ (\pm \sqrt{2^r+B},1,1), \ (\pm \sqrt{2^r-B},-1,1), \\
(\pm \sqrt{-2^r+B},1,-1),\ (\pm \sqrt{-2^r-B},1,1)\}$. 
For $r\in \{1,2,4,5\}$, we show that the equation \eqref{x^2=By^p+Cz^p} with exponent $p$ of Type II has no asymptotic solution in $\mcO_K^3 \setminus S_r$. More precisely;

\begin{thm}
	\label{main result1 for x^2=By^p+2^rz^p over K}
	Let $K$ be a totally real field satisfying $(ES)$ with $\Cl_{S_K^{\prime}}(K)[2]=1$. Suppose for every solution $(\alpha, \beta, \gamma) \in \mcO_{S_K^{\prime}}^\ast \times \mcO_{S_K^{\prime}}^\ast \times \mcO_{S_K^{\prime}}$ to $\alpha+\beta=\gamma^2$, 	
	there exists $\mfP \in U_K$ that satisfies 
	\begin{equation}
		\label{assumption for main result1 x^2=By^p+2^rz^p over K}
		\left|v_\mfP( \alpha \beta^{-1}) \right| \leq 6v_\mfP(2) \text{ and } v_\mfP \left(\alpha \beta^{-1} \right)\equiv 0 \pmod 3.
	\end{equation}
	Then for $r \in \{1,2,4,5\}$, the Diophantine equation~\eqref{x^2=By^p+Cz^p} with exponent $p$ of Type II has no asymptotic solution in $\mcO_K^3 \setminus S_r$.
\end{thm}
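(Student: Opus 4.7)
Assume for contradiction that $(a,b,c) \in \mathcal{O}_K^3 \setminus S_r$ is a non-trivial primitive solution of \eqref{x^2=By^p+Cz^p} with exponent $p$ sufficiently large. I would first establish modularity of the Frey curve $E := E_{a,b,c}$, mimicking Theorem~\ref{modularity of Frey curve x^2=By^p+Cz^p over W_K}. The only subtle step in that proof used $(a,b,c) \in W_K$ to rule out $b = \pm c$; here the condition $(a,b,c) \notin S_r$ plays the analogous role, since $b = \pm c$ together with primitivity forces $b,c \in \{\pm 1\}$, which reduces the equation to $a^2 = \pm B \pm 2^r$ — precisely a member of $S_r$, contradicting the hypothesis. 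I would then perform level-lowering, using assumption $(ES)$, irreducibility of $\bar{\rho}_{E,p}$ (after possibly replacing $K$ by its Galois closure, as in Theorem~\ref{auxilary result x^2=By^p+Cz^p over W_K}), and an isogeny step to acquire a non-trivial $2$-torsion point, producing an elliptic curve $E'/K$ with good reduction outside $S_K'$, a non-trivial $2$-torsion, and $\bar{\rho}_{E,p} \sim \bar{\rho}_{E',p}$. Unlike Theorem~\ref{auxilary result x^2=By^p+Cz^p over W_K}, one cannot assert here $v_\mathfrak{P}(j_{E'}) < 0$ for all $\mathfrak{P} \in S_K$, but the divisibility properties of $\#\bar{\rho}_{E,p}(I_\mathfrak{P})$ still pass to $E'$ via the isomorphism of residual representations.

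Writing $E'$ in the form $y^2 = x^3 + cx^2 + dx$, setting $\lambda = c^2/d$, $\mu = \lambda - 4$, and rescaling by a suitable $S_K'$-unit exactly as in the proof of Theorem~\ref{main result1 for x^2=By^p+Cz^p Type I}, I obtain a triple $(\alpha,\beta,\gamma) \in \mathcal{O}_{S_K'}^{\ast} \times \mathcal{O}_{S_K'}^{\ast} \times \mathcal{O}_{S_K'}$ with $\alpha + \beta = \gamma^2$ and $\alpha\beta^{-1} = \mu/4$. By the hypothesis, there exists $\mathfrak{P} \in U_K$ with $|v_\mathfrak{P}(\mu/4)| \leq 6 v_\mathfrak{P}(2)$ and $v_\mathfrak{P}(\mu/4) \equiv 0 \pmod{3}$. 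The bound gives $v_\mathfrak{P}(j_{E'}) \geq 0$ by the same case analysis on $v_\mathfrak{P}(\mu)$ as in Theorem~\ref{main result1 for x^2=By^p+Cz^p Type I}; combining the mod $3$ condition with $3 \nmid v_\mathfrak{P}(2)$ (since $\mathfrak{P} \in U_K$) in the formula $v_\mathfrak{P}(j_{E'}) = 8 v_\mathfrak{P}(2) + 3 v_\mathfrak{P}(\mu+1) - v_\mathfrak{P}(\mu)$ yields $v_\mathfrak{P}(j_{E'}) \equiv 0 \pmod{3}$, and hence $3 \mid v_\mathfrak{P}(\Delta_{E'})$ via the identity $v_\mathfrak{P}(j_{E'}) \equiv -v_\mathfrak{P}(\Delta_{E'}) \pmod{3}$.

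The contradiction comes from analyzing $E$ at this same $\mathfrak{P}$, in two sub-cases. If $\mathfrak{P} \mid bc$, then $E$ has potentially multiplicative reduction at $\mathfrak{P}$ with $p \nmid v_\mathfrak{P}(j_E)$ (Lemma~\ref{reduction on T and S}), so $p \mid \#\bar{\rho}_{E,p}(I_\mathfrak{P}) = \#\bar{\rho}_{E',p}(I_\mathfrak{P})$; Lemma~\ref{criteria for potentially multiplicative reduction} applied to $E'$ then forces $v_\mathfrak{P}(j_{E'}) < 0$, contradicting $v_\mathfrak{P}(j_{E'}) \geq 0$. If $\mathfrak{P} \nmid bc$, a direct calculation gives $v_\mathfrak{P}(j_E) = (6-r) v_\mathfrak{P}(2) \geq 0$ and $v_\mathfrak{P}(\Delta_E) = (6+r) v_\mathfrak{P}(2)$; since $\mathfrak{P} \in U_K$ and $r \in \{1,2,4,5\}$ (so $3 \nmid 6+r$), one has $3 \nmid v_\mathfrak{P}(\Delta_E)$, whence by Lemma~\ref{3 divides discriminant} $3 \mid \#\bar{\rho}_{E,p}(I_\mathfrak{P}) = \#\bar{\rho}_{E',p}(I_\mathfrak{P})$; since $v_\mathfrak{P}(j_{E'}) \geq 0$ places $E'$ in potentially good reduction at $\mathfrak{P}$, Lemma~\ref{3 divides discriminant} applied to $E'$ forces $3 \nmid v_\mathfrak{P}(\Delta_{E'})$ — contradicting $3 \mid v_\mathfrak{P}(\Delta_{E'})$. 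The main obstacle is the refined level-lowering step, where one cannot rely on the uniform potential multiplicativity at $S_K$ used in Theorem~\ref{auxilary result x^2=By^p+Cz^p over W_K}; instead one must track the image of inertia via $\bar{\rho}_{E,p} \sim \bar{\rho}_{E',p}$. The extra mod $3$ hypothesis and the restriction $r \in \{1,2,4,5\}$ (equivalently $3 \nmid 6+r$) are precisely calibrated to produce the complementary $3$-divisibility statements in the potentially-good sub-case.
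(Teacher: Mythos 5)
Your proposal is correct and follows essentially the same route as the paper: modularity via the exclusion of $S_r$, level lowering under $(ES)$, transfer of the inertia-image divisibilities through $\bar{\rho}_{E,p} \sim \bar{\rho}_{E',p}$, the identification $\alpha\beta^{-1} = \mu/4$ giving $v_\mfP(j_{E'}) \geq 0$ and $3 \mid v_\mfP(j_{E'})$, and the two-case contradiction at $\mfP \in U_K$ (the paper merely packages your two sub-cases into Lemma~\ref{reduction on T and S x^2=By^p+Cz^p Type II over K} and Theorem~\ref{auxilary result x^2=By^p+2^rz^p over K}). One cosmetic remark: $3 \nmid v_\mfP(2)$ is not actually needed in your mod-$3$ computation of $v_\mfP(j_{E'})$, since $v_\mfP(j_{E'}) \equiv 6v_\mfP(2) - v_\mfP(\alpha\beta^{-1}) \equiv -v_\mfP(\alpha\beta^{-1}) \pmod 3$ regardless; the condition $\mfP \in U_K$ is used only, as you correctly do, to get $3 \nmid v_\mfP(\Delta_E) = (6+r)v_\mfP(2)$ in the potentially good sub-case.
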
	
The following proposition is a consequence of Theorem~\ref{main result1 for  x^2=By^p+2^rz^p over K} and will be relevant in \S\ref{section for local criteria for Diophantine equations over $K$}.
\begin{prop}
	\label{main result2 for x^2=By^p+2^rz^p over K}
	\label{cor to main result2 for x^2=By^p+2^rz^p over K}
	Let $K$ be a field satisfying $(ES)$ with degree $n>1$ and $2 \nmid h_K^+$. Assume $B= \pm1$, and $2$ is inert in $K$. Suppose for every solution $(\alpha, \gamma) \in \mcO_{S_K}^\ast \times \mcO_{S_K}$ to $\alpha+1=\gamma^2$, there exists $\mfP \in U_K$ that satisfies
	\begin{equation}
		\label{assumption for main result2 x^2=By^p+2^rz^p over K}
		\left|v_\mfP(\alpha) \right| \leq 6v_\mfP(2) \text{ and } v_\mfP \left(\alpha \right)\equiv 0 \pmod 3.
	\end{equation}
	Then for $r \in \{1,2,4,5\}$, the Diophantine equation~\eqref{x^2=By^p+Cz^p} with exponent $p$ of Type II has no asymptotic solution in $\mcO_K^3 \setminus S_r$.
	In particular, if $[K:\Q]$ is odd, then $S_1=\{(\pm1, -1,1)\}$ (resp., $\{(\pm1, 1,1)\}$) for $B=1$ (resp., $B=-1$), and $S_r= \phi $ for $r=2,4,5$. 
\end{prop}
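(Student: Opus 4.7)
The plan is to derive this proposition from Theorem~\ref{main result1 for x^2=By^p+2^rz^p over K} by verifying its hypotheses under the stronger assumptions here, mirroring the deduction of Proposition~\ref{main result2 for x^2=By^p+Cz^p Type I} from Theorem~\ref{main result1 for x^2=By^p+Cz^p Type I}. Since $B = \pm 1$ and $2$ is inert in $K$, we have $S_K = S_K^{\prime} = \{\mfP_0\}$ with $\mfP_0 = 2\mcO_K$ and $v_{\mfP_0}(2) = 1$; hence $U_K = S_K$ automatically because $(3,1) = 1$. Since the ordinary class group of $K$ is a quotient of the narrow one, $2 \nmid h_K^+$ gives $2 \nmid h_K$, and so $\Cl_{S_K^{\prime}}(K)[2] = 1$. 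It therefore suffices to check, for every $(\alpha,\beta,\gamma) \in \mcO_{S_K}^\ast \times \mcO_{S_K}^\ast \times \mcO_{S_K}$ with $\alpha+\beta=\gamma^2$, that $\mfP_0$ meets both $|v_{\mfP_0}(\alpha\beta^{-1})| \le 6 v_{\mfP_0}(2)$ and $v_{\mfP_0}(\alpha\beta^{-1}) \equiv 0 \pmod 3$ from \eqref{assumption for main result1 x^2=By^p+2^rz^p over K}.

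Scaling $(\alpha,\beta,\gamma)$ by an even power of a $\mfP_0$-uniformizer and swapping $\alpha,\beta$ if needed, arrange $0 \le v_{\mfP_0}(\beta) \le v_{\mfP_0}(\alpha)$ with $v_{\mfP_0}(\beta) \in \{0,1\}$. If $v_{\mfP_0}(\beta) = 1$, parity of $v_{\mfP_0}(\gamma^2)$ forces $v_{\mfP_0}(\alpha) = 1$ as well (otherwise $v_{\mfP_0}(\alpha + \beta) = 1$ is odd), so $v_{\mfP_0}(\alpha\beta^{-1}) = 0$ and both conditions hold trivially. If $v_{\mfP_0}(\beta) = 0$ with $\beta = \delta^2$ a square in $K$, dividing the equation by $\beta$ yields $\alpha\beta^{-1} + 1 = (\gamma/\delta)^2$, and hypothesis \eqref{assumption for main result2 x^2=By^p+2^rz^p over K} applied to this reduced equation furnishes both the valuation bound and the mod-$3$ congruence on $\alpha\beta^{-1}$.

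The main obstacle is the remaining subcase: $\beta$ a unit of $\mcO_K$ that is not a square in $K$. Arguing by contradiction, if $\mfP_0$ fails \eqref{assumption for main result1 x^2=By^p+2^rz^p over K}, then $v_{\mfP_0}(\alpha) > 6 v_{\mfP_0}(2)$, so $\alpha \equiv 0 \pmod{2^6}$, $\gamma \in \mcO_K$, and $\gamma^2 \equiv \beta \pmod{2^6}$. Setting $\theta := (\gamma - \sqrt{\beta})/2$, the minimal polynomial $m_\theta(x) = x^2 - \gamma x + (\gamma^2 - \beta)/4$ lies in $\mcO_K[x]$ with discriminant $\beta$, so $L := K(\sqrt{\beta})/K$ is quadratic and unramified at every finite place (the delicate analysis of the ramification above $2$ is precisely where \cite[Theorem 9(b)]{FKS20} is invoked, as flagged in the introduction). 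Unlike in Type I, here $\beta$ need not be totally positive; ramification at real places is exactly what the narrow class group tracks, and the hypothesis $2 \nmid h_K^+$ therefore rules out any quadratic extension of $K$ unramified at all finite places, delivering the contradiction.

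For the ``in particular'' clause, $K$ totally real of odd degree contains no nontrivial quadratic subfield of $\Q$, so $\sqrt{m} \in K$ for $m \in \Z$ forces $m \ge 0$ to be a perfect square. Among the eight values $\pm 2^r \pm B$ for $B = \pm 1$ and $r \in \{1,2,4,5\}$, the only perfect squares that appear are $2 - B = 1$ (when $B = 1$, $r = 1$) and $2 + B = 1$ (when $B = -1$, $r = 1$), giving $S_1 = \{(\pm 1, -1, 1)\}$ and $S_1 = \{(\pm 1, 1, 1)\}$ respectively; for $r \in \{2,4,5\}$ the values $\{3,5\}, \{15,17\}, \{31,33\}$ contain no perfect squares and the negative alternatives have no real square roots in $K$, so $S_r = \emptyset$.
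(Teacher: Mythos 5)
Your overall reduction to Theorem~\ref{main result1 for x^2=By^p+2^rz^p over K}, the normalization $0\le v_\mfP(\beta)\le v_\mfP(\alpha)$ with $v_\mfP(\beta)\in\{0,1\}$, the parity argument when $v_\mfP(\beta)=1$, and the division by $\beta$ when it is a square all coincide with the paper's proof, and your verification of the ``in particular'' clause (odd degree forces $\pm 2^r\pm B$ to be a rational perfect square, leaving only $2-B=1$ or $2+B=1$) is correct. The genuine gap is in the remaining case where $\beta$ is a non-square unit. You deduce from ``$\mfP$ fails \eqref{assumption for main result1 x^2=By^p+2^rz^p over K}'' that $v_\mfP(\alpha)>6v_\mfP(2)$; but \eqref{assumption for main result1 x^2=By^p+2^rz^p over K} is a \emph{conjunction}, and it can fail solely through the congruence $v_\mfP(\alpha)\not\equiv 0\pmod 3$ while $|v_\mfP(\alpha)|\le 6v_\mfP(2)$. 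In that regime your machinery never starts: integrality of $m_\theta(x)=x^2-\gamma x+(\gamma^2-\beta)/4$ requires $v_\mfP(\gamma^2-\beta)=v_\mfP(\alpha)\ge 2v_\mfP(2)$, so you cannot show $L=K(\sqrt{\beta})$ is unramified at the prime above $2$, and the contradiction with $2\nmid h_K^+$ is unavailable. This failure mode is not hypothetical: $(\alpha,\beta,\gamma)=(2,-1,1)$ solves $\alpha+\beta=\gamma^2$ in every totally real $K$, with $\beta=-1$ a non-square unit and $v_\mfP(\alpha)=1\not\equiv 0\pmod 3$ --- exactly the configuration your argument leaves open.

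The paper's treatment of this case is structurally different and makes no assumption on $v_\mfP(\alpha)$: it takes $m_\beta(x)=x^2-\beta\in\mcO_K[x]$, whose discriminant $4\beta$ shows only that $L=K(\sqrt{\beta})$ is unramified away from $2$ --- ramification \emph{above} $2$ is still permitted --- and then invokes \cite[Theorem 9(b)]{FKS20} to conclude that $2$ must be totally ramified in $K$, contradicting inertness since $n>1$. This is exactly the point flagged in the paper's introduction (the discriminant of $L/K$ involves a power of $2$, so plain narrow class field theory does not suffice), which also shows your parenthetical attribution is misplaced: as written, your extension would be unramified at \emph{all} finite places and you use only the narrow-class-group fact, never \cite[Theorem 9(b)]{FKS20}, whereas the paper needs that input precisely because it cannot rule out ramification at $\mfP$ without your (unavailable) large-valuation hypothesis. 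To repair the proposal you must replace the contradiction-from-failure scheme by the paper's unconditional disposal of the non-square case, or otherwise handle solutions such as $(2,-1,1)$; the $\theta$-trick plus $2\nmid h_K^+$ --- essentially the argument of Proposition~\ref{main result2 for x^2=By^p+Cz^p Type I} upgraded from $h_K$ to $h_K^+$ --- cannot by itself deliver the mod-$3$ congruence demanded in the Type~II setting.
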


\subsection{Steps to prove Theorem~\ref{main result1 for x^2=By^p+2^rz^p over K}.}
We now prove the modularity of the Frey curve $E:=E_{a,b,c}$ in \eqref{Frey curve for x^2=By^p+Cz^p of Type I} associated to any non-trivial primitive solution $(a,b,c)\in$ $\mcO_K^3 \setminus S_r$.
\begin{thm}
	\label{modularity of Frey curve of x^2=By^p+2^rz^p over K}
	Let $(a,b,c)\in \mcO_K^3 \setminus S_r$ be a non-trivial primitive solution to the equation~\eqref{x^2=By^p+Cz^p} with exponent $p$ of Type II, and let $E:=E_{a,b,c}$ be the associated Frey curve. Then, there exists a constant $A:=A_{K,B}$ (depending on $K,B$) such that for primes $p >A$, $E/K$ is modular.
\end{thm}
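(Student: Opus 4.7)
The plan is to mimic the proof of Theorem~\ref{modularity of Frey curve x^2=By^p+Cz^p over W_K} up to the point at which one deduces $b = \pm c$, and then to replace the $W_K$-based contradiction by a rationality-of-$j_E$ argument. By~\cite[Theorem 5]{FLBS15}, up to $\bar K$-isomorphism, only finitely many elliptic curves over $K$ fail to be modular; let $j_1,\ldots,j_t \in K$ be their $j$-invariants. With $\lambda(E) := -Bb^p/(2^r c^p)$, so that $j_E = 2^6(4-\lambda(E))^3/\lambda(E)^2$, each equation $j_E = j_i$ has at most three solutions in $K$, producing a finite set $\{\lambda_1,\ldots,\lambda_m\} \subset K$ outside which $E$ is automatically modular.

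Next, assume $\lambda(E) = \lambda_k$ for some $k$, which gives $(b/c)^p = \alpha_k := -2^r\lambda_k/B \in K^\ast$. If some prime $\mfq$ of $\mcO_K$ satisfies $v_{\mfq}(\alpha_k) \ne 0$, then $p \mid v_{\mfq}(\alpha_k)$, bounding $p$. Otherwise $\alpha_k \in \mcO_K^\ast$, and the equation $u^p = \alpha_k$ in the finitely generated group $\mcO_K^\ast / \mu_K$ has solutions for only finitely many primes $p$ unless $\alpha_k$ has finite order. Since $K$ is totally real, $\mu_K = \{\pm 1\}$, so the only remaining case is $\alpha_k = \pm 1$; as $p$ is odd, this forces $b/c = \pm 1$, i.e.\ $b = \pm c$. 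In all non-root-of-unity scenarios we obtain a bound $p \le p_k$ depending only on $K, B$ and $\lambda_k$.

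It remains to handle $b = \pm c$: primitivity of $(a,b,c)$ forces $b, c \in \mcO_K^\ast$, and direct substitution in~\eqref{Frey curve for x^2=By^p+Cz^p of Type I} yields
\[
j_E \;=\; \frac{2^6(B \pm 2^{r+2})^3}{B^2 \cdot 2^r} \in \Q.
\]
Hence $E/K$ is a $K$-twist of an elliptic curve $E_0/\Q$ with the same $j$-invariant; $E_0$ is modular by~\cite{W95}, and modularity transfers to $E/K$ because the associated Hilbert newform for $E$ is obtained from the base change to $K$ of the newform attached to $E_0$ by tensoring with a suitable Hecke character. Setting $A := A_{K,B} := \max_k p_k$, we conclude that $E/K$ is modular for every prime $p > A$. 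The main subtlety lies in this modularity-transfer step: for $j_E \ne 0, 1728$ it is the standard preservation of modularity under quadratic twists, whereas for the CM values $j_E \in \{0, 1728\}$ (which occur only when $B$ and $2^r$ satisfy an explicit polynomial relation) modularity of all twists over $K$ follows from classical CM theory.
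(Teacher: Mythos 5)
Your first two paragraphs track the paper's own proof: the same appeal to \cite[Theorem 5]{FLBS15} for the finitely many non-modular $j$-invariants, the same $\lambda$-parametrization, and the same reduction to the two cases ``$p$ bounded'' versus ``$b=\pm c$''. In fact your bounding of $p$ when $\lambda(E)=\lambda_k$ --- via valuations and the free part of $\mcO_K^\ast/\{\pm1\}$ --- is a cleaner and more defensible rendering of the paper's terse claim that the equation for $(b/c)^p$ ``determines $p$ uniquely''. One small inaccuracy there: since $\alpha_k=-2^r\lambda_k/B$, for $\mfq\mid 2$ you only get $p\leq |v_\mfq(\alpha_k)|=|rv_\mfq(2)+v_\mfq(\lambda_k/B)|$, so your $p_k$ depends on $r$ as well as on $K,B$ (contrary to your closing claim); this is harmless in the paper's applications, where $r\in\{1,2,4,5\}$, and the analogous Theorem~\ref{modularity of Frey curve 2x^2=By^p+Cz^p over W_K} explicitly allows the constant to depend on $r$.

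The genuine gap is in your treatment of $b=\pm c$, and it is signalled by the fact that you never use the hypothesis $(a,b,c)\notin S_r$ --- the paper's proof disposes of this case using exactly that hypothesis: coprimality forces $b,c$ to be units, the paper concludes $(a,b,c)\in S_r$, contradiction, and no further modularity input is needed. You instead try to prove modularity outright: $j_E\in\Q$, so $E$ is a twist of some $E_0/\Q$, modular over $\Q$, and ``modularity transfers by base change''. That last assertion is not a theorem: base change for $\GL_2$ is known only along solvable extensions (Langlands, Arthur--Clozel), and $K/\Q$ --- or its Galois closure --- need not be solvable, so the existence of the Hilbert newform over $K$ attached to the base change of the rational newform is precisely the unproven content; non-solvable base change is known only in special situations (e.g.\ work of Dieulefait) that do not cover an arbitrary $E_0/\Q$ and arbitrary totally real $K$. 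Indeed, if ``rational $j$-invariant implies modular over every totally real field'' were standard, the finiteness theorem of \cite{FLBS15} that you invoke in your first paragraph would be largely unnecessary; that theorem exists because modularity over a general totally real $K$ is not free. (Two side remarks: over $\Q$ you need the full modularity theorem, not just \cite{W95}, since $E_0$ need not be semistable; and your CM caveat is moot here, as $j_E=0$ cannot occur with $B$ odd, while CM curves over totally real fields are modular by automorphic induction.) The repair is simply the paper's route: once $b=\pm c$, primitivity gives $b,c\in\mcO_K^\ast$ and the exclusion $(a,b,c)\notin S_r$ finishes the proof. I note that your observation that primitivity only yields \emph{units} is actually more careful than the paper's bald ``we get $b=\pm1$'', which is immediate only when $\mcO_K^\ast=\{\pm1\}$; but the twist-and-base-change device you propose to handle the unit case is exactly the step that fails.
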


\begin{proof}
	Arguing as in the proof of Theorem~\ref{modularity of Frey curve x^2=By^p+Cz^p over W_K}, there exists $\lambda_k \in K$ with $1 \leq k \leq m$ such that $E/K$ is modular for all $\lambda(E) \notin\{\lambda_1, \lambda_2, ..., \lambda_m\}$.
	If $\lambda(E)= \lambda_k$ for some $k \in \{1, 2, \ldots, m \}$, then $\left(\frac{b}{c} \right)^p= \frac{-2^r\lambda_k}{B}$.
	The above equation determines $p$ uniquely, denoting it by $p_k$. Otherwise, we get $b=\pm c$.
	Since $a^2=Bb^p+2^rc^p$ and $(a,b,c)$ is primitive, we get $b=\pm1$ and $c=\pm1$, hence $(a,b,c) \in S_r$, which is not possible.
	Finally, the proof of the theorem follows by taking $A_K=\max \{p_1,...,p_m\}$.
\end{proof}

\subsubsection{Type of reduction with image of inertia.}
The following lemma specifies the type of reduction of the Frey curve $E:=E_{a,b,c}$ given in ~\eqref{Frey curve for x^2=By^p+Cz^p of Type I} at  $\mfP \in U_K$, when $(a,b,c)\in$ $\mcO_K^3$ and $r\in \{1,2,4,5\}$. More precisely;
\begin{lem}
	\label{reduction on T and S x^2=By^p+Cz^p Type II over K}
	Let $r\in \{1,2,4,5\}$. Let $(a,b,c)\in \mcO_K^3$ be a non-trivial primitive solution to the equation~\eqref{x^2=By^p+Cz^p} with exponent $p>(6+r)v_\mfP(2)$ of Type II, and let $E$ be the associated Frey curve. If $\mfP \in U_K$, then either $p | \#\bar{\rho}_{E,p}(I_\mfP)$ or $3 | \#\bar{\rho}_{E,p}(I_\mfP)$.
\end{lem}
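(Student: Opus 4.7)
The lemma splits into two cases according to whether $\mfP$ divides $bc$, and the two sub-conclusions of the ``or'' match these cases: primes dividing $bc$ force (potentially) multiplicative reduction and supply the factor $p$, while primes coprime to $bc$ force potential good reduction and supply the factor $3$. The plan is therefore to compute $v_\mfP(j_E)$ and $v_\mfP(\Delta_E)$ in each case and then invoke Lemma~\ref{criteria for potentially multiplicative reduction} or Lemma~\ref{3 divides discriminant} respectively. This generalises the Type~II part of Lemma~\ref{reduction on T and S}, where the case $\mfP\nmid bc$ was excluded by the definition of $W_K$; here that case is genuinely present and carries the new content.

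\textbf{Case $\mfP\mid bc$.} Primitivity of $(a,b,c)$ forces $\mfP$ to divide exactly one of $b,c$. Using $p>(6+r)v_\mfP(2)$ together with $v_\mfP(b)\ge 1$ (or $v_\mfP(c)\ge 1$), the dominant term in $Bb^p+2^{r+2}c^p$ is determined, and a direct computation from $j_E=2^{6-r}(Bb^p+2^{r+2}c^p)^3 B^{-2}(b^2c)^{-p}$ yields $v_\mfP(j_E)=(12+2r)v_\mfP(2)-2p\,v_\mfP(b)$ when $\mfP\mid b$ and $v_\mfP(j_E)=(6-r)v_\mfP(2)-p\,v_\mfP(c)$ when $\mfP\mid c$. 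In both sub-cases $v_\mfP(j_E)<0$, and the absolute value has the form $|v_\mfP(j_E)|=p\cdot m - v_\mfP(2)\cdot n$ with $0<n\,v_\mfP(2)<2p$. So $p\mid v_\mfP(j_E)$ would force $n\,v_\mfP(2)=p$, which is impossible since the left side is even and $p\ge 3$ is odd. Lemma~\ref{criteria for potentially multiplicative reduction} then delivers $p\mid\#\bar\rho_{E,p}(I_\mfP)$.

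\textbf{Case $\mfP\nmid bc$.} Here $v_\mfP(Bb^p)=0<(r+2)v_\mfP(2)=v_\mfP(2^{r+2}c^p)$, so $v_\mfP(Bb^p+2^{r+2}c^p)=0$ and $v_\mfP(j_E)=(6-r)v_\mfP(2)$. For $r\in\{1,2,4,5\}$ this is strictly positive, so $E$ has potential good reduction at $\mfP$, and Lemma~\ref{3 divides discriminant} applies. From the model, $v_\mfP(\Delta_E)=(6+r)v_\mfP(2)$; since changing the Weierstrass model alters $v_\mfP(\Delta)$ by a multiple of $12$, the divisibility by $3$ is model-independent. Now $3\nmid v_\mfP(2)$ by $\mfP\in U_K$, and $3\nmid 6+r$ for $r\in\{1,2,4,5\}$, so $3\nmid(6+r)v_\mfP(2)=v_\mfP(\Delta_E)$, yielding $3\mid\#\bar\rho_{E,p}(I_\mfP)$.

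\textbf{Main obstacle.} The only genuinely delicate arithmetic step is verifying $p\nmid v_\mfP(j_E)$ in the multiplicative case: the hypothesis $p>(6+r)v_\mfP(2)$ is chosen precisely so that the ``obstructing'' constants $(6-r)v_\mfP(2)$ and $(12+2r)v_\mfP(2)$ lie strictly between $0$ and $2p$, leaving only the value $p$ itself to exclude, which is ruled out by parity. The restriction $r\in\{1,2,4,5\}$ is not cosmetic but is exactly what makes the potential-good-reduction case work: the excluded values $r=3,6$ give $3\mid 6+r$, breaking the Lemma~\ref{3 divides discriminant} criterion.
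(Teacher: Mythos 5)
Your proof is correct and follows essentially the same route as the paper's: the paper disposes of the case $\mfP \mid bc$ by citing Lemma~\ref{reduction on T and S} (whose Type~II computation is exactly the one you redo inline) and treats $\mfP \nmid bc$ precisely as you do, via $v_\mfP(j_E)=(6-r)v_\mfP(2)\geq 0$, $v_\mfP(\Delta_E)=(6+r)v_\mfP(2)$, $3\nmid(6+r)v_\mfP(2)$ for $\mfP\in U_K$ and $r\in\{1,2,4,5\}$, and Lemma~\ref{3 divides discriminant}. One small repair to your uniform parity argument: in the subcase $\mfP\mid c$ the quantity $n\,v_\mfP(2)=(6-r)v_\mfP(2)$ need not be even (e.g.\ $r\in\{1,5\}$ with $v_\mfP(2)$ odd), so evenness alone does not exclude $n\,v_\mfP(2)=p$; instead one uses $0<(6-r)v_\mfP(2)<(6+r)v_\mfP(2)<p$, which rules out $p\mid v_\mfP(j_E)$ directly, exactly as in the paper's proof of Lemma~\ref{reduction on T and S}.
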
 

\begin{proof}
Recall that $\Delta_E=2^{r+6}B^2(b^2c)^{p}$ and $j_E=2^{6-r} \frac{(Bb^p+2^{r+2}c^p)^3}{B^2(b^2c)^{p}}$. If $\mfP |bc$, then $p | \#\bar{\rho}_{E,p}(I_\mfP)$ by Lemma~\ref{reduction on T and S} and due to the fact that $p>(6+r)v_\mfP(2)$. If $\mfP \nmid bc$, then $v_\mfP(j_E)= (6-r)v_\mfP(2)$ and $v_\mfP(\Delta_E)=(6+r)v_\mfP(2)$. Since $\mfP \in U_K$ and $r\in \{1,2,4,5\}$, we get $v_\mfP(j_E)\geq 0$ and $3 \nmid v_\mfP(\Delta_E)$.
Hence, by Lemma~\ref{3 divides discriminant}, we get  $3 | \#\bar{\rho}_{E,p}(I_\mfP)$.
%
\end{proof}

\subsection{Proof of Theorem~\ref{main result1 for x^2=By^p+2^rz^p over K}}	
The proof of this theorem depends on the following result.
\begin{thm}
\label{auxilary result x^2=By^p+2^rz^p over K}
Let $K$ be a totally real field satisfying $(ES)$, and let $r\in \{1,2,4,5\}$. 
Then, there is a constant $V:=V_{K,B}>0$ (depending on $K,B$) such that the following holds. Let $(a,b,c)\in \mcO_K^3 \setminus 
S_r$ be a non-trivial primitive solution to the equation~\eqref{x^2=By^p+Cz^p} with exponent $p >V$ of Type II, and let $E$ be the Frey curve as in ~\eqref{Frey curve for x^2=By^p+Cz^p of Type I}. Then there exists an elliptic curve $E^\prime/K$ such that:

\begin{enumerate}
	\item $E^\prime/K$ has good reduction away from $S_K^{\prime}$ and has a non-trivial $2$-torsion point, and
	$\bar{\rho}_{E,p} \sim\bar{\rho}_{E^\prime,p}$;
	\item For $\mfP \in U_K$, either $v_\mfP(j_{E^\prime})<0$ or $3\nmid v_\mfP(j_{E^\prime})$.
\end{enumerate}
\end{thm}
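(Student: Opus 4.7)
The plan is to mimic the proof of Theorem \ref{auxilary result x^2=By^p+Cz^p over W_K}: push the Frey curve $E$ through modularity, irreducibility, level lowering and Eichler--Shimura to produce an elliptic curve $E^\prime$, and then translate the inertia dichotomy of Lemma \ref{reduction on T and S x^2=By^p+Cz^p Type II over K} into the statement about $v_\mfP(j_{E^\prime})$ at primes of $U_K$.

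First I would invoke Theorem \ref{modularity of Frey curve of x^2=By^p+2^rz^p over K} to conclude $E/K$ is modular for all $p > A_{K,B}$. After replacing $K$ by its Galois closure (if necessary) and applying \cite[Theorem 2]{FS15 Irred}, we may assume $\bar{\rho}_{E,p}$ is irreducible for $p\gg 0$. Lemma \ref{reduction away from S} shows that $E$ is semistable away from $S_K^\prime$ with $p \mid v_\mfq(\Delta_E)$ at every $\mfq \notin S_K^\prime$, so \cite[Theorem 7]{FS15} yields a Hilbert modular newform $f$ of parallel weight $2$ and level $\mfn_p$ with $\bar{\rho}_{E,p} \sim \bar{\rho}_{f,\omega}$, and by further enlarging $p$ we may assume $\Q_f=\Q$ (cf.\ \cite[\S 4]{FS15}). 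Since $K$ satisfies $(ES)$, Conjecture \ref{ES conj} delivers an elliptic curve $E_f/K$ of conductor $\mfn_p$ with $\bar{\rho}_{E,p} \sim \bar{\rho}_{E_f,p}$. Following \cite[p.~1247]{M22}, after enlarging $V$ and replacing $E_f$ by an isogenous curve $E^\prime$, we may assume $E^\prime$ has a non-trivial $2$-torsion point; since isogenies preserve the conductor, $E^\prime$ still has good reduction away from $S_K^\prime$. This establishes part $(1)$.

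For part $(2)$, fix $\mfP \in U_K$. Since $U_K \subseteq S_K$, we have $\mfP \mid 2$, hence $\mfP \nmid p$ for $p > V$ large enough. By Lemma \ref{reduction on T and S x^2=By^p+Cz^p Type II over K}, either $p \mid \#\bar{\rho}_{E,p}(I_\mfP)$ or $3 \mid \#\bar{\rho}_{E,p}(I_\mfP)$, and both properties transfer to $\bar{\rho}_{E^\prime,p}$. In the first case, Lemma \ref{criteria for potentially multiplicative reduction} applied to $E^\prime$ forces potentially multiplicative reduction at $\mfP$, hence $v_\mfP(j_{E^\prime})<0$. In the second case, either $v_\mfP(j_{E^\prime})<0$ and we are done, or $E^\prime$ has potential good reduction at $\mfP$, in which case Lemma \ref{3 divides discriminant} yields $3 \nmid v_\mfP(\Delta_{E^\prime})$; combined with the identity $v_\mfP(j_{E^\prime}) = 3 v_\mfP(c_4(E^\prime)) - v_\mfP(\Delta_{E^\prime}) \equiv - v_\mfP(\Delta_{E^\prime}) \pmod 3$, this gives $3 \nmid v_\mfP(j_{E^\prime})$.

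The main technical obstacle, inherited from Mocanu's analogue \cite[Theorem 5]{M22}, is the step producing an isogenous $E^\prime$ with a non-trivial $2$-torsion point: one must verify that every isogeny class arising from the level-lowering step contains such a representative, and this requires a separate analysis of the inertia image at primes above $2$. Once this is in hand and the modularity/level-lowering input is secured, the remainder of the argument is a formal translation via the two inertia lemmas.
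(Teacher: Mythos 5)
Your proposal matches the paper's own proof essentially step for step: part (1) via modularity (Theorem~\ref{modularity of Frey curve of x^2=By^p+2^rz^p over K}), irreducibility, level lowering and the $(ES)$ hypothesis to produce $E_f$ of conductor $\mfn_p$ (the paper packages the level-lowering plus Eichler--Shimura input as \cite[Theorem 8]{FS15} rather than Theorem 7 plus Conjecture~\ref{ES conj} separately, a purely cosmetic difference), with the $2$-torsion point obtained exactly as you say from \cite[p.~1247]{M22}, and part (2) via the identical dichotomy through Lemmas~\ref{criteria for potentially multiplicative reduction}, \ref{reduction on T and S x^2=By^p+Cz^p Type II over K} and \ref{3 divides discriminant} together with $v_\mfP(j_{E^\prime})=3v_\mfP(c_4)-v_\mfP(\Delta_{E^\prime})\equiv -v_\mfP(\Delta_{E^\prime}) \pmod 3$. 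One small correction to your closing commentary: the $2$-torsion step in \cite{M22} is carried out by comparing traces of Frobenius at auxiliary primes (exploiting that only finitely many levels $\mfn_p$, hence finitely many newforms, occur) to bound $p$, not by a separate analysis of inertia at primes above $2$; since you correctly delegate that step and enlarge $V$, this does not affect the validity of your argument.
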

\begin{proof}
Arguing as in the proof of Theorem~\ref{auxilary result x^2=By^p+Cz^p over W_K}, the first part of Theorem~\ref{auxilary result x^2=By^p+2^rz^p over K} follows from \cite[Theorem 8]{FS15}, Theorem~\ref{conductor of E and E' x^2=By^p+Cz^p Type I}, Theorem~\ref{modularity of Frey curve of x^2=By^p+2^rz^p over K} and Lemma~\ref{reduction on T and S x^2=By^p+Cz^p Type II over K}. Let $\mfP \in U_K$. If $p | \# \bar{\rho}_{E,p}(I_\mfP)= \# \bar{\rho}_{E^\prime,p}(I_\mfP)$, then by Lemma~\ref{criteria for potentially multiplicative reduction}, we get $v_\mfP(j_{E^\prime})<0$. If $p \nmid \# \bar{\rho}_{E,p}(I_\mfP)$, then by Lemma~\ref{reduction on T and S x^2=By^p+Cz^p Type II over K}, we conclude that $3 | \# \bar{\rho}_{E,p}(I_\mfP)=\# \bar{\rho}_{E^\prime,p}(I_\mfP)$. If $v_\mfP(j_{E^\prime}) < 0$, then we are done. If $v_\mfP(j_{E^\prime}) \geq 0$, then  by Lemma~\ref{3 divides discriminant}, we have $3\nmid v_\mfP(\Delta_{E^\prime})$. Since $j_{E^\prime} = \frac{c_4^3}{\Delta_{E'}}$ and $3\nmid v_\mfP(\Delta_{E^\prime})$, we get $3\nmid v_\mfP(j_{E^\prime})$. This completes the proof of the theorem.
\end{proof}

\begin{proof}[Proof of Theorem~\ref{main result1 for x^2=By^p+2^rz^p over K}]
Let $r\in \{1,2,4,5\}$. Let $(a,b,c)\in \mcO_K^3 \setminus S_r$ be a non-trivial primitive solution to the equation~\eqref{x^2=By^p+Cz^p} with exponent $p >V$ of Type II, where $V:=V_{K,B}$ is the constant as in Theorem~\ref{auxilary result x^2=By^p+2^rz^p over K}. By Theorem~\ref{auxilary result x^2=By^p+2^rz^p over K}, there exists an elliptic curve $E^\prime/K$ having a non-trivial $2$-torsion point and good reduction away from $S_K^{\prime}$. 
By ~\eqref{assumption for main result1 x^2=By^p+2^rz^p over K}, there exists some $\mfP \in U_K$ that satisfies $|v_\mfP(\alpha\beta^{-1}) |\leq 6v_\mfP(2)$ and $v_\mfP(\alpha\beta^{-1})\equiv 0 \pmod 3$.

Now, arguing as in the proof of Theorem~\ref{main result1 for x^2=By^p+Cz^p Type II}, we find $v_\mfP(j_{E^\prime}) \geq 0$ by using $|v_\mfP(\alpha\beta^{-1}) |\leq 6v_\mfP(2)$.
Recall that $j_{E^\prime} = 2^8\frac{(1+\mu)^3}{\mu}$ with $\frac{\mu}{4}= \alpha\beta^{-1}$. This implies $v_\mfP(j_{E^\prime}) \equiv 2v_\mfP(2)-v_\mfP(\mu)= -v_\mfP(\alpha\beta^{-1}) \pmod3 $. 
Since $v_\mfP(\alpha\beta^{-1})\equiv 0 \pmod 3$, $v_\mfP(j_{E^\prime})\equiv 0 \pmod 3$ and hence $3 | v_\mfP(j_{E^\prime})$.
Therefore, $v_\mfP(j_{E^\prime}) \geq0$ and $3 | v_\mfP(j_{E^\prime})$, which contradicts Theorem~\ref{auxilary result x^2=By^p+2^rz^p over K}. This completes the proof of the theorem.
\end{proof}


\begin{proof}[Proof of Proposition~\ref{main result2 for x^2=By^p+2^rz^p over K}]
Since $B=\pm1$, and $2$ is inert in $K$, we find $S_K^\prime =S_K$ is principal. Let $\mfP \in U_K$ be the unique prime lying above $2$. Let $(\alpha, \beta, \gamma)\in  \mcO_{S_K}^\ast \times \mcO_{S_K}^\ast \times \mcO_{S_K}$ be a solution to $\alpha+\beta=\gamma^2$. According to Theorem~\ref{main result1 for x^2=By^p+2^rz^p over K}, it suffices to show that $|v_\mfP(\alpha\beta^{-1}) |\leq 6v_\mfP(2)$ and $v_\mfP(\alpha\beta^{-1}) \equiv 0 \pmod3$. If necessary by scaling even powers of $\mfP$ and swapping $\alpha, \beta$, we can assume $0\leq v_\mfP(\beta)\leq v_\mfP(\alpha)$ with $v_\mfP(\beta)=0$ or $1$.		
\begin{enumerate}
\item Suppose $v_\mfP(\beta)=1$. If $v_\mfP(\alpha)>1$, then $v_\mfP(\gamma^2)=v_\mfP(\alpha+\beta)=v_\mfP(\beta)=1$, which cannot occur because $v_\mfP(\gamma^2)$ is even. As a result, $v_\mfP(\alpha)=1$. Thus $|v_\mfP(\alpha\beta^{-1})|=0<6v_\mfP(2)$ and $v_\mfP(\alpha\beta^{-1}) \equiv 0 \pmod 3$.

\item Suppose $v_\mfP(\beta)=0$, i.e., $\beta$ is a unit in $K$.
\begin{itemize}
	\item Assume $\beta$ is a square. Then divide the equation $\alpha +\beta= \gamma^2$ by $\beta$ to obtain an equation of the form $\alpha'+1=\gamma'^2$, where $\alpha'= \alpha\beta^{-1} \in \mcO_{S_K}^\ast$, $\gamma' \in \mcO_{S_K}$. Using~\eqref{assumption for main result2 x^2=By^p+2^rz^p over K}, we get $|v_\mfP(\alpha\beta^{-1}) |\leq 6v_\mfP(2)$ and $v_\mfP(\alpha\beta^{-1}) \equiv 0 \pmod 3$.
	
	\item Assume $\beta$ is not a square. Consider the field $L=K(\sqrt{\beta})$. The minimal polynomial of $\beta$ is $m_\beta (x)=x^2-\beta$. Then $m_\beta (x)\in \mcO_K[x]$ with discriminant $4\beta$. Hence, $L$ is unramified away from $2$. By~\cite[Theorem 9(b)]{FKS20}, we conclude that $2$ totally ramified in $K$, which contradicts $2$ being inert in $K$.
\end{itemize}
\end{enumerate}
This completes the proof of the proposition.
\end{proof}

\section{Local criteria for the solutions of Diophantine equations}
\label{section for loc criteria}
In this section, we present several local criteria of $K$ which imply Theorems~\ref{main result1 for x^2=By^p+Cz^p Type I},~\ref{main result1 for  x^2=By^p+2^rz^p over K}.
We start this discussion with a lemma.

  \begin{lem}
  	\label{lemma to loc crit1 for x^2=By^p+2^rz^p over W_K}
 	\label{lem for loc crit1 for x^2=By^p+2^rz^p over K}
 	Suppose the $S_K^{\prime}$-unit equation $\lambda+\mu=1$ with $\lambda, \mu \in \mcO_{S_K^{\prime}}^\ast$ has only solutions $(-1,2), (2,-1)$ and $(\frac{1}{2}, \frac{1}{2})$. Then every solution $(\alpha, \gamma) \in \mcO_{S_K^{\prime}}^\ast \times \mcO_{S_K^{\prime}}$ to the equation $\alpha+1=\gamma^2$ satisfies $|v_\mfP(\alpha)|\leq 3v_\mfP(2)$ and $v_\mfP(\alpha) \equiv 0 \pmod 3$ for all $\mfP \in S_K$.
 \end{lem}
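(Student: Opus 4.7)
The plan is to convert the equation $\alpha + 1 = \gamma^2$ into the $S_K^{\prime}$-unit equation $\lambda + \mu = 1$ by factoring, and then enumerate the finitely many possibilities for $\alpha$ afforded by the hypothesis. The key observation is that $\alpha = \gamma^2 - 1 = (\gamma - 1)(\gamma + 1)$ is a product in $\mcO_{S_K^{\prime}}$ whose value lies in $\mcO_{S_K^{\prime}}^\ast$. Since both factors sit inside $\mcO_{S_K^{\prime}}$, for any $\mfq \notin S_K^{\prime}$ the non-negative valuations $v_\mfq(\gamma-1)$ and $v_\mfq(\gamma+1)$ add up to $v_\mfq(\alpha) = 0$, forcing both to vanish. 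Hence $\gamma - 1,\ \gamma + 1 \in \mcO_{S_K^{\prime}}^\ast$.

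Next, since $2 \in S_K \subseteq S_K^{\prime}$, division by $2$ is harmless inside $\mcO_{S_K^{\prime}}^\ast$. Setting
\[
\lambda := \frac{\gamma + 1}{2}, \qquad \mu := \frac{1 - \gamma}{2},
\]
we get $\lambda, \mu \in \mcO_{S_K^{\prime}}^\ast$ and $\lambda + \mu = 1$. By the hypothesis on the $S_K^{\prime}$-unit equation, $(\lambda, \mu)$ must be one of $(-1, 2)$, $(2, -1)$, or $(\tfrac12, \tfrac12)$.

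Solving each case for $\gamma$ gives $\gamma \in \{-3, 3, 0\}$, so $\alpha = \gamma^2 - 1 \in \{8, 8, -1\}$. Finally I check the two conclusions for each value of $\alpha$ and each $\mfP \in S_K$: when $\alpha = 8 = 2^3$, we have $v_\mfP(\alpha) = 3 v_\mfP(2)$, so $|v_\mfP(\alpha)| = 3v_\mfP(2)$ and $v_\mfP(\alpha) \equiv 0 \pmod 3$; when $\alpha = -1$, both $|v_\mfP(\alpha)| = 0 \leq 3v_\mfP(2)$ and $v_\mfP(\alpha) \equiv 0 \pmod 3$ hold trivially.

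There is no real obstacle here — the only subtlety is confirming that both $\gamma \pm 1$ are genuinely $S_K^{\prime}$-units (not merely that their product is), which is what makes the reduction to the unit equation go through. Once that is in hand, the proof is a direct enumeration.
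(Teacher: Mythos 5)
Your proposal is correct and follows essentially the same route as the paper: the paper also sets $\lambda=\frac{\gamma+1}{2}$, $\mu=\frac{1-\gamma}{2}$, deduces $\lambda,\mu\in\mcO_{S_K^{\prime}}^\ast$ from the identity $\alpha=-4\lambda\mu$ with $\alpha\in\mcO_{S_K^{\prime}}^\ast$ (your valuation check that both factors $\gamma\pm1$ are units is just an unwound version of this), and enumerates the three solutions to conclude $\alpha=-1$ or $8$. The case analysis and the final verification of the two valuation conditions match the paper's proof.
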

 \begin{proof} 
The solution $(\alpha, \gamma) \in \mcO_{S_K^{\prime}}^\ast \times \mcO_{S_K^{\prime}}$ to the equation $\alpha+1=\gamma^2$ gives rise to a solution of $\lambda +\mu=1$ with $\lambda, \mu \in \mcO_{S_K^{\prime}}$ as follows.
Take $\lambda :=\frac{\gamma+1}{2}, \mu :=\frac{1-\gamma}{2}$. Since $\gamma \in \mcO_{S_K^{\prime}}$, 
then so are $\lambda, \mu$. Since $\alpha =-4 \lambda \mu$ with $\alpha \in \mcO_{S_K^{\prime}}^\ast$, we get $\lambda, \mu \in \mcO_{S_K^{\prime}}^\ast$ with $\lambda +\mu=1$. The choices of $(\lambda, \mu) \in \{(-1,2), (2,-1), (\frac{1}{2}, \frac{1}{2}) \}$ implies $\alpha= -1$ or $8$. Therefore, $|v_\mfP(\alpha)|\leq 3v_\mfP(2)$ and $v_\mfP(\alpha) \equiv 0 \pmod 3$ for all $\mfP\in S_K$.
 \end{proof}

\subsection{Local criteria for Theorem~\ref{main result1 for x^2=By^p+Cz^p Type I}:}
\label{section for local criteria for Diophantine equations over $W_K$}
In this section, we give local criteria of $K$, which imply Theorem~\ref{main result1 for x^2=By^p+Cz^p Type I}. Throughout this section, we assume $B= \pm1$ and $C= \pm1$ or $2^r$ for some $r \in \N$ to get  $S_K^{\prime}=S_K$.
 
\begin{prop}[Quadratic]
	\label{loc crit for real quadratic field over W_K}
Let $K=\Q(\sqrt{d})$ for some prime $d$ with $d \equiv 5 \pmod8$. Then the conclusion of Theorem~\ref{main result1 for x^2=By^p+Cz^p Type I} holds over $K$.
\end{prop}

\begin{proof}
	Since $d \equiv 5 \pmod 8$, $K$ has discriminant $d$, $2$ is inert in $K$, and hence  
	$S_K$ is principal. The assumptions on $B,C$ give $S_K^{\prime}= S_K$.
		By~\cite[\S 3.8]{M22}, we have $2 \nmid h_K$. 
 	By~\cite[Table 1, \S 6]{FS15}, the $S_K$-unit equation $\lambda+\mu=1$ with $\lambda, \mu \in \mcO_{S_K}^\ast$ has only solutions $(-1,2), (2,-1)$ and $(\frac{1}{2}, \frac{1}{2})$. 
  Now the proof of the proposition follows from Lemma~\ref{lemma to loc crit1 for x^2=By^p+2^rz^p over W_K} and Proposition~\ref{main result2 for x^2=By^p+Cz^p Type I}.
\end{proof}
 
\begin{prop}[Odd degree]
	\label{loc crit for odd degree field over W_K}
	Let $K$ be a field of degree $n$ with $2 \nmid h_K$. Suppose 
	\begin{enumerate}
        \item there exists a prime $q \geq5$ with $\gcd(n, q-1)=1$ such that $q$ totally ramifies in $K$;
		\item $2$ is either inert or $2=\mfP ^n $ for some principal ideal $\mfP \in P$.
	\end{enumerate}
Then the conclusion of Theorem~\ref{main result1 for x^2=By^p+Cz^p Type I} holds over $K$.
\end{prop}

\begin{proof}
Let $\mfP \in S_K$ be the unique prime ideal lying above $2$. Arguing as in the proof of Lemma~\ref{lemma to loc crit1 for x^2=By^p+2^rz^p over W_K}, 
every solution $(\alpha, \gamma) \in \mcO_{S_K^{\prime}}^\ast \times \mcO_{S_K^{\prime}}$ to the equation $\alpha+1=\gamma^2$ gives rise to a solution $\lambda, \mu \in \mcO_{S_K^{\prime}}^\ast$ with  $\lambda +\mu=1$,
where $\lambda=\frac{\gamma+1}{2}$, $\mu=\frac{1-\gamma}{2}$, and a relation $\alpha=-4\lambda \mu$.
Since $S_K^{\prime}=S_K$, by~\cite[Lemma 4.1]{FKS21}, we get $ \max\{|v_\mfP(\lambda)|, |v_\mfP(\mu)|\} < 2v_\mfP(2)$. In particular, $|v_\mfP(\alpha)|= |2v_\mfP(2)+ v_\mfP(\lambda)+ v_\mfP(\mu)| < 6v_\mfP(2)$. Now, the proof of the proposition follows from Proposition~\ref{main result2 for x^2=By^p+Cz^p Type I}.
\end{proof}

\subsection{Local criteria for Theorem~\ref{main result1 for x^2=By^p+2^rz^p over K}:}
\label{section for local criteria for Diophantine equations over $K$}
In this section, we provide local criteria of $K$, which imply Theorem~\ref{main result1 for x^2=By^p+2^rz^p over K}. Throughout this section, we assume that $B=\pm1$ and $C=2^r$ with $r=1,2,4,5$ to get $S_K^{\prime}=S_K$.

\begin{prop}[Quadratic]
	\label{loc crit for real quadratic field over K}
Assume that Conjecture~\ref{ES conj} holds over $K=\Q(\sqrt{d})$ for some prime $d$ with $d \equiv 5 \pmod8$. 
Then the conclusion of Theorem~\ref{main result1 for x^2=By^p+2^rz^p over K} holds over $K$.
\end{prop}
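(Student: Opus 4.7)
The plan is to verify all hypotheses of Proposition~\ref{cor to main result2 for x^2=By^p+2^rz^p over K} for $K = \Q(\sqrt{d})$, from which the conclusion of Theorem~\ref{main result1 for x^2=By^p+2^rz^p over K} will follow directly. First I would record the basic arithmetic of $K$: since $d \equiv 1 \pmod 4$ is prime, the discriminant of $K$ equals $d$ and $\mcO_K = \Z[\frac{1+\sqrt{d}}{2}]$; since moreover $d \equiv 5 \pmod 8$, the minimal polynomial $x^2 - x + \frac{1-d}{4}$ is irreducible modulo $2$, so $2$ is inert in $K$. Consequently $S_K^{\prime} = S_K = \{\mfP\}$ is principal with $v_\mfP(2) = 1$, and $\mfP \in U_K$ because $\gcd(3, v_\mfP(2)) = 1$. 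The degree $n = 2 > 1$, the hypothesis $(ES)$ is assumed, and the standing convention $B = \pm 1$ from \S\ref{section for local criteria for Diophantine equations over $K$} is in force.

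Next I would verify $2 \nmid h_K^+$. By genus theory, for $d$ prime with $d \equiv 1 \pmod 4$ the only ramified prime in $K/\Q$ is $d$, so the $2$-rank of $\Cl(K)$ is $0$ and hence $h_K$ is odd. For $d$ a prime with $d \equiv 1 \pmod 4$, it is classical that the fundamental unit of $K$ has norm $-1$ (equivalently, the Pell equation $x^2 - d y^2 = -1$ is solvable), and this forces $h_K^+ = h_K$. Therefore $h_K^+$ is odd.

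Finally I would verify hypothesis~\eqref{assumption for main result2 x^2=By^p+2^rz^p over K}. By~\cite[Table 1, \S 6]{FS15}, over $K = \Q(\sqrt{d})$ with $d \equiv 5 \pmod 8$ prime, the $S_K$-unit equation $\lambda + \mu = 1$ has exactly the three solutions $(-1,2)$, $(2,-1)$, and $(\frac{1}{2}, \frac{1}{2})$. Lemma~\ref{lem for loc crit1 for x^2=By^p+2^rz^p over K} then gives $|v_\mfP(\alpha)| \leq 3 v_\mfP(2) \leq 6 v_\mfP(2)$ together with $v_\mfP(\alpha) \equiv 0 \pmod 3$ for every solution $(\alpha, \gamma) \in \mcO_{S_K}^\ast \times \mcO_{S_K}$ of $\alpha + 1 = \gamma^2$. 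Since $\mfP \in U_K$, this is exactly what~\eqref{assumption for main result2 x^2=By^p+2^rz^p over K} demands, and Proposition~\ref{cor to main result2 for x^2=By^p+2^rz^p over K} then yields the desired conclusion.

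The main obstacle I anticipate is the verification that $h_K^+$ is odd: the oddness of $h_K$ via genus theory is standard, but one must separately invoke the norm-$(-1)$ property of the fundamental unit (valid for $d$ an odd prime with $d \equiv 1 \pmod 4$) to identify $h_K^+$ with $h_K$. Beyond this point, the argument runs in close parallel to Proposition~\ref{loc crit for real quadratic field over W_K}, with the extra congruence $v_\mfP(\alpha) \equiv 0 \pmod 3$ provided for free by Lemma~\ref{lem for loc crit1 for x^2=By^p+2^rz^p over K}.
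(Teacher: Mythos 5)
Your proposal is correct and follows essentially the same route as the paper: both verify the hypotheses of Proposition~\ref{main result2 for x^2=By^p+2^rz^p over K} by noting that $d\equiv 5\pmod 8$ prime forces $2$ inert and $2\nmid h_K^+$, then use \cite[Table 1, \S 6]{FS15} to pin down the three solutions of the $S_K$-unit equation and feed them through Lemma~\ref{lem for loc crit1 for x^2=By^p+2^rz^p over K}. The only difference is that you spell out the oddness of $h_K^+$ (genus theory plus the norm-$(-1)$ fundamental unit for prime $d\equiv 1\pmod 4$), which the paper simply asserts; this is a welcome but inessential elaboration.
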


\begin{proof}
 Since $d$ is a prime with $d\equiv 5 \pmod 8$, $2 \nmid h_K^+$, $2$ is inert in $K$ and hence $S_K^{\prime}=S_K$ is principal. Arguing as in the proof of Proposition~\ref{loc crit for real quadratic field over W_K}, 
 we find that the $S_K$-unit equation $\lambda+\mu=1$ with $\lambda, \mu \in \mcO_{S_K}^\ast$ has only solutions $(\frac{1}{2},\frac{1}{2} ), (2,-1)$ and $ (-1,2)$. Now the proof of the proposition follows from Lemma~\ref{lem for loc crit1 for x^2=By^p+2^rz^p over K} and Proposition~\ref{main result2 for x^2=By^p+2^rz^p over K}.	
\end{proof}
 
\begin{prop}[Odd degree]
	\label{loc crit2 for odd degree field over K}
Let $K$ be a field of degree $n>1$ with $2 \nmid h_K^+$. Suppose 
	\begin{enumerate}
 \item there exists a prime $q \geq5$ with $\gcd(n, q-1)=1$ such that $q$ totally ramifies in $K$;
		\item $2$ is inert in $K$.
\end{enumerate}
 Then the conclusion of Theorem~\ref{main result1 for x^2=By^p+2^rz^p over K} holds over $K$.
\end{prop}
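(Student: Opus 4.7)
The plan is to verify the hypotheses of Proposition~\ref{main result2 for x^2=By^p+2^rz^p over K} and then invoke it. Since $2$ is inert in $K$, $S_K = S_K^{\prime} = \{\mfP\}$ with $\mfP = 2\mcO_K$ principal and $v_\mfP(2) = 1$; thus $\gcd(3, v_\mfP(2)) = 1$ places $\mfP$ in $U_K$, and $2 \nmid h_K^+$ implies $2 \nmid h_K$. All but the valuation hypothesis of Proposition~\ref{main result2 for x^2=By^p+2^rz^p over K} are already in place, so it remains to check that every $(\alpha, \gamma) \in \mcO_{S_K}^\ast \times \mcO_{S_K}$ solving $\alpha + 1 = \gamma^2$ satisfies $|v_\mfP(\alpha)| \leq 6v_\mfP(2)$ and $v_\mfP(\alpha) \equiv 0 \pmod 3$.

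Following the reduction in Lemma~\ref{lem for loc crit1 for x^2=By^p+2^rz^p over K}, I would substitute $\lambda := (\gamma + 1)/2$ and $\mu := (1 - \gamma)/2$ to obtain an $S_K$-unit pair with $\lambda + \mu = 1$ and $\alpha = -4\lambda\mu$, whence $v_\mfP(\alpha) = 2 + v_\mfP(\lambda) + v_\mfP(\mu)$. Conditions (i) and (ii) are exactly those required to invoke~\cite[Lemma 4.1]{FKS21}, which gives $\max\{|v_\mfP(\lambda)|, |v_\mfP(\mu)|\} < 2v_\mfP(2) = 2$, and in particular $|v_\mfP(\alpha)| \leq 4 < 6v_\mfP(2)$, securing the bound. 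This step mirrors the corresponding argument in Proposition~\ref{loc crit for odd degree field over W_K}.

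The main obstacle is the congruence $v_\mfP(\alpha) \equiv 0 \pmod 3$. With $v_\mfP(\lambda), v_\mfP(\mu) \in \{-1, 0, 1\}$, the ultrametric constraint $v_\mfP(\lambda + \mu) = 0$ eliminates every mixed-sign pair as well as $(0,-1)$ and $(-1,0)$, leaving only the pairs $(1, 0)$, $(0, 1)$, $(-1, -1)$, and $(0, 0)$. The first three give $v_\mfP(\alpha) \in \{0, 3\}$, but the residual pair $(0, 0)$ --- corresponding to units $\lambda, \mu \in \mcO_K^\ast$ with $\lambda + \mu = 1$ --- would give $v_\mfP(\alpha) = 2$ and must therefore be excluded. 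I expect this exclusion to follow from the finer content of~\cite[Lemma 4.1]{FKS21}, which under hypotheses (i) and (ii) in fact pins down the $S_K$-unit solutions of $\lambda + \mu = 1$ to the three standard ones $\{(-1, 2), (2, -1), (1/2, 1/2)\}$ (precisely the three that appear in Proposition~\ref{loc crit for real quadratic field over K} via~\cite[Table 1, \S 6]{FS15}). Once this $(0, 0)$ case is ruled out, Lemma~\ref{lem for loc crit1 for x^2=By^p+2^rz^p over K} yields $\alpha \in \{-1, 8\}$, both valuation conditions hold automatically, and Proposition~\ref{main result2 for x^2=By^p+2^rz^p over K} completes the proof.
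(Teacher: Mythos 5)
Your proposal follows the paper's overall route: reduce to Proposition~\ref{main result2 for x^2=By^p+2^rz^p over K}, pass from a solution $(\alpha,\gamma)$ of $\alpha+1=\gamma^2$ to the $S_K$-unit pair $(\lambda,\mu)=\left(\frac{\gamma+1}{2},\frac{1-\gamma}{2}\right)$ with $\alpha=-4\lambda\mu$, and obtain $|v_\mfP(\alpha)|<6v_\mfP(2)$ from \cite[Lemma 4.1]{FKS21}. Where you genuinely diverge is the congruence modulo $3$. The paper first normalizes $(\lambda,\mu)$ by the $\mfS_3$-action (Lemma~\ref{integral reduction of lambda, mu}) to a pair $(\lambda',\mu')$ with $\lambda'\in\mcO_K$, $\mu'\in\mcO_K^\ast$, rules out $m_{\lambda',\mu'}=0$ by \cite[Theorem 4]{FKS21}, concludes $v_\mfP(\lambda'\mu')=1=v_\mfP(2)$, and transfers this back via \cite[Lemma 6.2(ii)]{FS15}, which makes $v_\mfP(\lambda\mu)\equiv v_\mfP(2)\pmod 3$ an orbit invariant. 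Your direct ultrametric enumeration, using $v_\mfP(\lambda+\mu)=0$ to reduce to the pairs $(1,0),(0,1),(-1,-1),(0,0)$, is a correct and more elementary substitute that makes the orbit computation explicit; the two arguments coincide modulo packaging.

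The one soft spot is exactly the step you flagged: excluding $(0,0)$. You ``expect'' it from ``finer content'' of \cite[Lemma 4.1]{FKS21}, conjecturing that hypotheses (1)--(2) pin the $S_K$-unit solutions down to $(2,-1)$, $(-1,2)$, $(\frac{1}{2},\frac{1}{2})$. That is not what Lemma 4.1 gives, and the pinning-down claim is not available in this generality: the three-solution statement used in Proposition~\ref{loc crit for real quadratic field over K} comes from the explicit computation in \cite[Table 1, \S 6]{FS15}, which is specific to real quadratic fields. What you need --- and all you need --- is \cite[Theorem 4]{FKS21}: under hypothesis (1), the unit equation $\lambda+\mu=1$ has no solutions with $\lambda,\mu\in\mcO_K^\ast$, which kills the pair $(0,0)$ outright; this is precisely the citation the paper invokes. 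With that fixed, your closing appeal to Lemma~\ref{lem for loc crit1 for x^2=By^p+2^rz^p over K} to force $\alpha\in\{-1,8\}$ is both unjustified (it presupposes the three-solution claim) and unnecessary, since your own enumeration already yields $v_\mfP(\alpha)\in\{0,3\}$, hence $v_\mfP(\alpha)\equiv 0\pmod 3$. One further omission: Proposition~\ref{main result2 for x^2=By^p+2^rz^p over K} requires $K$ to satisfy $(ES)$; since $q\geq 5$ makes $q-1$ even, $\gcd(n,q-1)=1$ forces $n$ odd, so $(ES)$ holds --- the paper records this, and you should too.
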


Define $\Lambda_{S_K}:=\{(\lambda, \mu) : \lambda+\mu=1, \ \lambda,\mu \in \mcO_{S_K}^\ast\}$.
By the discussion in \cite[\S 5]{FS15},
the action of the symmetric group $\mfS_3$ on $\mbP^1(K) \setminus \{0,1, \infty\}$ induces an action on $\Lambda_{S_K}$ as $(\lambda, \mu)^\sigma :=(\lambda^\sigma, \mu^\sigma)$ with $\sigma \in \mfS_3$. For any $(\lambda, \mu) \in \Lambda_{S_K}$ and $\mfP \in S_K$, define $m_{\lambda, \mu}(\mfP) := \max \{|v_\mfP(\lambda)|, |v_\mfP(\mu)|\}$. If $\mfP \in S_K$ is unique, then we write $m_{\lambda, \mu}$ for $m_{\lambda, \mu}(\mfP)$. 
\begin{lem}
	\label{integral reduction of lambda, mu}
	Suppose $2$ is inert in $K$ and let $\mfP \in S_K$ be the unique prime lying over $2$. Then, for any $(\lambda, \mu) \in \Lambda_{S_K}$, there exists $(\lambda^\prime, \mu^\prime) \in \Lambda_{S_K}$ with $\lambda^\prime \in \mcO_K$, $\mu^\prime \in \mcO_K^\ast$ and $\sigma \in \mfS_3$ such that $(\lambda^\prime, \mu^\prime)= (\lambda, \mu)^\sigma$ and $m_{\lambda, \mu}= m_{\lambda^\prime, \mu^\prime}$.
\end{lem}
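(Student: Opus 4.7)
The plan is to exploit the assumption that $2$ is inert in $K$, so that $S_K = \{\mfP\}$: every element of $\mcO_{S_K}^\ast$ then has trivial valuation away from $\mfP$, and the whole lemma reduces to adjusting the $\mfP$-adic valuations. Writing $a := v_\mfP(\lambda)$ and $b := v_\mfP(\mu)$, the identity $\lambda + \mu = 1$ combined with the ultrametric inequality would force $(a,b)$ into one of four shapes: (i) $a = b = 0$, (ii) $a > 0$ and $b = 0$, (iii) $a = 0$ and $b > 0$, or (iv) $a = b < 0$. Indeed, if $a \neq b$ then $v_\mfP(\lambda + \mu) = \min(a,b)$ must equal $v_\mfP(1) = 0$, forcing $\min(a,b) = 0$; while if $a = b$ then the ultrametric gives $0 = v_\mfP(\lambda + \mu) \geq a$, so $a \leq 0$. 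This is the only place where a genuine argument is used.

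Next I would enumerate the $\mfS_3$-orbit of $(\lambda,\mu)$ explicitly. Coming from the cross-ratio transformations $\lambda \mapsto \lambda,\ 1-\lambda,\ 1/\lambda,\ 1/(1-\lambda),\ \lambda/(\lambda-1),\ (\lambda-1)/\lambda$, the six resulting pairs in $\Lambda_{S_K}$ are $(\lambda,\mu)$, $(\mu,\lambda)$, $(1/\lambda,\, -\mu/\lambda)$, $(1/\mu,\, -\lambda/\mu)$, $(-\lambda/\mu,\, 1/\mu)$, and $(-\mu/\lambda,\, 1/\lambda)$. For the four cases above I would pick $\sigma$ as follows: in cases (i) and (ii), take $\sigma = \mathrm{id}$; in case (iii), take the swap $(\lambda,\mu) \mapsto (\mu,\lambda)$; in case (iv), apply the transformation $\lambda \mapsto 1/\lambda$, producing $(1/\lambda,\, -\mu/\lambda)$, whose components have $\mfP$-adic valuations $-a > 0$ and $b - a = 0$, respectively.

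A direct check then shows that the resulting pair $(\lambda',\mu')$ always satisfies $\lambda' \in \mcO_K$ and $\mu' \in \mcO_K^\ast$: in cases (i)--(iii) this is obvious from the valuations, and in case (iv) one sees that $1/\lambda$ has valuation $-a > 0$ at $\mfP$ (and $0$ elsewhere), while $-\mu/\lambda$ is a unit everywhere. Preservation of the quantity $m_{\lambda,\mu}$ is equally transparent: in the first three cases the multiset of valuations of the two components is simply a permutation of $\{|a|,|b|\}$, and in case (iv) we have $m_{\lambda,\mu} = |a|$ while $m_{\lambda',\mu'} = \max(|-a|,\, 0) = |a|$. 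There is no real obstacle beyond the bookkeeping in the case analysis; the only point demanding care is the exhaustiveness of the four shapes of $(a,b)$, which rests solely on the sharp form of the ultrametric inequality applied to $\lambda + \mu = 1$.
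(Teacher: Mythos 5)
Your proof is correct and takes essentially the same route as the paper's: the identical case split on $(v_\mfP(\lambda),v_\mfP(\mu))$ forced by the ultrametric inequality applied to $\lambda+\mu=1$, with the same three choices of $\sigma$ (the identity, $\lambda\mapsto 1-\lambda$, and $\lambda\mapsto 1/\lambda$, the last giving $(1/\lambda,\,-\mu/\lambda)$ exactly as in the paper). The only difference is that you make explicit the exhaustiveness of the four valuation shapes, which the paper leaves implicit.
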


\begin{proof}
	If $v_\mfP(\lambda)= v_\mfP(\mu)=0$, or $v_\mfP(\lambda)>0$ (in this case $v_\mfP(\mu)=0$),
	then take  $\lambda^\prime=\lambda,\ \mu^\prime=\mu$ and $\sigma (\lambda)= \lambda$. 
	If $v_\mfP(\mu)>0$, then $v_\mfP(\lambda)=0$ and take $\lambda^\prime=\mu,\ \mu^\prime=\lambda$ and $\sigma (\lambda)=1-\lambda$. If $v_\mfP(\lambda)<0$ then $v_\mfP(\mu)= v_\mfP(\lambda)= -m_{\lambda, \mu}<0$ and take $\lambda^\prime= \frac{1}{\lambda}$, $\mu^\prime=1-\frac{1}{\lambda}$ and  $\sigma (\lambda)= \frac{1}{\lambda}$. In all cases, we can choose $\lambda^\prime \in \mcO_K$, $\mu^\prime \in \mcO_K^\ast$ with $m_{\lambda^\prime, \mu^\prime}=m_{\lambda, \mu}.$
\end{proof}
\begin{proof}[Proof of Proposition~\ref{loc crit2 for odd degree field over K}]
	Let $\mfP\in U_K$ be the unique prime lying above $2$.
By~\cite[Lemma 4.1]{FKS21}, we have $m_{\lambda, \mu} <2v_\mfP(2)=2$ for all $(\lambda, \mu) \in \Lambda_{S_K}$. By Lemma~\ref{integral reduction of lambda, mu}, there exists $\lambda^\prime \in \mcO_K$, $\mu^\prime \in \mcO_K^\ast$ such that
$m_{\lambda^\prime, \mu^\prime}<2$. If $m_{\lambda^\prime, \mu^\prime}=0$ then $\lambda^\prime, \mu^\prime \in \mcO_K^\ast$, contradicts~\cite[Theorem 4]{FKS21}. So, $m_{\lambda^\prime, \mu^\prime}=1$. Since $\lambda^\prime \in \mcO_K, \mu^\prime \in \mcO_K^\ast$, we get
$v_\mfP(\lambda^\prime)=1$ and hence $v_\mfP(\lambda^\prime \mu^\prime)=1=v_\mfP(2)$. By~\cite[Lemma 6.2(ii)]{FS15}, we have 
	\begin{equation}
		\label{valuation of lambda mu}
		v_\mfP(\lambda \mu)\equiv v_\mfP(2) \pmod3.
	\end{equation}
    We now show that the proof of Proposition~\ref{loc crit2 for odd degree field over K} follows from Proposition~\ref{cor to main result2 for x^2=By^p+2^rz^p over K}. Now, arguing as in the proof of Lemma~\ref{lemma to loc crit1 for x^2=By^p+2^rz^p over W_K}, 
    every solution $(\alpha, \gamma) \in \mcO_{S_K}^\ast \times \mcO_{S_K}$  to the equation $\alpha+1=\gamma^2$ 
gives rise to an element $(\lambda,\mu)\in \Lambda_{S_K}$ with a relation $\alpha=-4\lambda \mu$.
Since $m_{\lambda, \mu} <2$, $|v_\mfP(\alpha)| \leq 2 + |v_\mfP(\lambda )|+ |v_\mfP(\mu)|<6=6v_\mfP(2)$. By \eqref{valuation of lambda mu}, we get $ v_\mfP(\alpha)\equiv 0 \pmod3$. Note that, here $n$ is odd, hence $K$ satisfies $(ES)$. 
\end{proof}

\section{Acknowledgments}  
     The authors are thankful to the anonymous referee for the valuable suggestions to the improvement of this paper. 
     This research was supported in part by the International Centre for Theoretical Sciences (ICTS) for participating in the program - ICTS Rational Points on Modular Curves (code: ICTS/RPMC-2023/09).

\end{document}